\definecolor{darkblue}{RGB}{0,0,160}
\theoremstyle{plain}
\newtheorem{thm}{Theorem}
\newtheorem{cor}[thm]{Corollary}
\newtheorem{prop}[thm]{Proposition}
\theoremstyle{remark}
\newtheorem{example}[thm]{Example}
\newtheorem{remark}[thm]{Remark}
\newtheorem{defn}[thm]{Definition}
\newtheorem{algorithm}{Algorithm}
\newcommand{\x}{\mathbf{x}}
\newcommand{\y}{\mathbf{y}}
\newcommand{\z}{\mathbf{z}}
\newcommand{\X}{\mathbf{X}}
\newcommand{\Y}{\mathbf{Y}}
\newcommand{\W}{\mathbf{W}}
\newcommand{\Z}{\mathbf{Z}}
\newcommand{\RR}{\mathbb{R}}
\newcommand{\PP}{\mathbb{P}}
\newcommand{\EE}{\mathbb{E}}
\newcommand{\Cov}{\text{Cov}}
\newcommand{\MTPtwo}{$ \text{MTP}_2 $ } 
\newcommand{\MTPtwok}{$ \text{MTP}_2 $, }
\newcommand{\MTPtwop}{$ \text{MTP}_2 $. }
\newcommand{\EMTPtwo}{$ \text{EMTP}_2 $ }
\newcommand{\EMTPtwok}{$ \text{EMTP}_2 $, }
\newcommand{\indep}{\perp \!\!\! \perp}
\newcommand{\bs}{\boldsymbol}
\newcommand{\CM}{\operatorname{CM}}
\newcommand{\A}{\operatorname{A}}
\DeclareMathOperator*{\argmax}{\operatorname{arg max}}
\DeclareMathOperator*{\argmin}{\operatorname{arg min}}
\DeclareFontFamily{U}{mathx}{\hyphenchar\font45}
\DeclareFontShape{U}{mathx}{m}{n}{
	<5> <6> <7> <8> <9> <10>
	<10.95> <12> <14.4> <17.28> <20.74> <24.88>
	mathx10
}{}
\DeclareSymbolFont{mathx}{U}{mathx}{m}{n}
\DeclareMathAccent{\widecheck}{0}{mathx}{"71}
\begin{document}
\title{On the local metric property in multivariate extremes}
\author[F.~R\"ottger]{Frank R\"ottger$^1$}
\author[Q.~Schmitz]{Quentin Schmitz$^1$}
\address[]{$^1$Research Center for Statistics\\Université de Genève\\ 1205 Geneva\\Switzerland}
\email{frank.roettger@unige.ch, quentin.schmitz@etu.unige.ch}
\date{\today}
\keywords{Graphical models, Multivariate extremes, Positive association, Mixed exponential families}

\makeatletter
\@namedef{subjclassname@2020}{\textup{2020} Mathematics Subject Classification}
\makeatother
\subjclass[2020]{Primary: 62H22; Secondary: 60G70, 62G32, 90C25}

\begin{abstract}
Many multivariate data sets exhibit a form of positive dependence, which can either appear globally between all variables or only locally within particular subgroups.
A popular notion of positive dependence that allows for localized positivity is positive association.
In this work we introduce the notion of extremal positive association for multivariate extremes from threshold exceedances. 
Via a sufficient condition for extremal association, we show that extremal association generalizes extremal tree models. 
For H\"usler--Reiss distributions the sufficient condition permits a parametric description that we call the metric property. 
As the parameter of a H\"usler--Reiss distribution is a Euclidean distance matrix, the metric property relates to research in electrical network theory and Euclidean geometry.
We show that the metric property can be localized with respect to a graph and study surrogate likelihood inference. This gives rise to a two-step estimation procedure for locally metrical H\"usler--Reiss graphical models.
The second step allows for a simple dual problem, which is implemented via a gradient descent algorithm. 
Finally, we demonstrate our results on simulated and real data.
\end{abstract}
\maketitle
\section{Introduction}

Notions of positive dependence are important in many applications in statistics, as they have desirable properties in asymptotics or computations, permit simple interpretation in terms of conditional independence or correlation and appear in many real-world examples.
Applications include financial data \citep{WRU2020}, gene networks and phylogenetics \citep{SUZ2020}, sequential testing \citep{Benjamini2001} or multivariate extremes \citep{REZ2021}.
In multivariate extremes, positive dependence is observed frequently, for example because extreme events in financial or meteorological data are often driven by common factors.
Such observations are supported by asymptotic theory, as for example max-stable distributions are always positively associated \citep{MO1983}.

Positive association is a popular notion of positive dependence first studied by \citet{Esary1967}, see also \cite{LZ2022}.
In this paper we introduce a notion of extremal positive association for multivariate Pareto distributions, which are the only possible limit distributions for threshold exceedances \citep{Rootzen2006}.
Our work connects to the new framework of extremal conditional independence and extremal graphical models for multivariate Pareto distributions \citep{EH2020}.
This work has created a new line of research in graphical extremes, for example in structure learning \citep{EV2022,ELV2021,LO2023} or in parameter estimation \citep{HES2022,RCG2023}.
Furthermore, \citet{REZ2021} define an extremal version of a strong notion of positive dependence called multivariate total positivity of order two ($\text{MTP}_2$). 

While \MTPtwo has many interesting properties for statistical inference \citep{LUZ2019}, it is by construction a global property that in general can not be encoded by any collection of marginals. 
For example, a multivariate Gaussian is \MTPtwo when all partial correlations are non-negative.
In contrast, a multivariate Gaussian is positively associated if and only if all correlations are non-negative \citep{Pitt1982}.
This means that a multivariate Gaussian is positively associated when all its bivariate marginals are associated.
\citet{LZ2022} employ this structure for a natural localization of positive association, where for a given undirected graph $ G=(V,\mathcal{E}) $ with vertex set $ V $ and edge set $ \mathcal{E}\subset V\times V $ all marginals indexed by the cliques of $ G $ are supposed to be positively associated.
For Gaussians, this simplifies such that each bivariate marginal corresponding to an edge in $ G $ is required to have non-negative correlation.
This allows for more flexible models that can incorporate positive dependence locally, as it does not impose potentially unrealistic non-negative correlations between disconnected vertices in the graph $ G $.
\citet{LZ2022} discuss estimation for graphical models under local association and introduce the mixed dual estimator for this problem, which provides a general estimation procedure under convex constraints for mixed convex exponential families.

In the positive dependence literature, a recurring example for models with intrinsic positive dependence are latent variable models.
For example, \citet{Karlin1980} study \MTPtwo and \citet{HR1986} the notions of conditional positive association and latent \MTPtwo for unidimensional latent variable distributions, i.e.~latent variable distributions with a single latent variable.
Furthermore, \citet{Benjamini2001} show that in multiple testing problems the false discovery rate can be controlled under the assumption that the vector of test statistics is a latent variable model that satisfies a weaker form of positive dependence called positive regression dependency \citep{Lehmann1966,sarkar1969}.

The first contribution of this work is based on the conditional latent variable structure of multivariate Pareto distributions.
After introducing a notion of extremal association for multivariate Pareto distributions (Section~\ref{sec:extr_assoc}), we find a sufficient condition for extremal association in Theorem~\ref{t:extr_assoc}. 
This follows from the observation (in Proposition~\ref{prop:latent_factor_assoc}) that given a univariate random variable $ X_0 $ which is independent of a positively associated random vector $ \X $, the linear latent variable model $ X_0\mathbf{1}+\X $ is also positively associated.
As a consequence, we find that bivariate Pareto distributions and more generally extremal tree models are always extremal associated (see Corollary~\ref{cor:trees}).

The second contribution of this work is a detailed investigation of extremal association for H\"usler--Reiss distributions, which we summarize below. 

\subsection{Extremal association for H\"usler--Reiss distributions}

H\"usler--Reiss distributions are a parametric family of multivariate Pareto distributions that can be considered as an extremal analogue of Gaussians. They are parameterized by a variogram matrix $ \Gamma $, that is a Euclidean distance matrix. 
\cite{HES2022} introduce a H\"usler--Reiss precision matrix $ \Theta $ that is one-to-one to $ \Gamma $ and show that zeros in $ \Theta $ encode extremal conditional independence.
This allows for a simple parametric description of H\"usler--Reiss graphical models, so that for a given undirected graph $ G=(V,\mathcal{E}) $ a non-edge $ ij\not\in\mathcal{E} $ implies a vanishing entry $ \Theta_{ij}=0 $ in the precision matrix.
\cite{REZ2021} prove that a H\"usler--Reiss distribution is extremal \MTPtwo ($\text{EMTP}_2$) when $ \Theta $ is the Laplacian matrix of an undirected graph.

In Section~\ref{sec:loc_metr} we find that the trivariate constraints
\[\Gamma_{ij}\le \Gamma_{ik}+\Gamma_{jk},\;\text{ for all }\;  i,j,k\in \{1,\ldots,d\} \]
are a sufficient condition for $ d $-variate H\"usler--Reiss distributions to be extremal associated.
In this case we say that $ \Gamma $ satisfies the \textit{metric property} as $ \Gamma $ is by definition non-negative and symmetric.
If $ \Gamma $ is interpreted as a Euclidean distance matrix, the metric property means that every triangle with squared edge lengths $ (\Gamma_{ij},\Gamma_{ik},\Gamma_{jk}) $ is hyperacute.
When $ \Theta $ is a Laplacian matrix, then the corresponding $ \Gamma $ always fulfills the triangle inequality, while the inverse statement is not true \citep{Devriendt2022}.
This observation has an interesting parallel in electrical network theory. 
When the matrix of resistances $ \Theta $ of the network is a Laplacian matrix, i.e., all resistances are positive, the effective resistance matrix $ \Gamma $ is guaranteed to define a distance function.
In practice there might be negative resistances in a network \citep{Chen2016}, so that the metric interpretation is not guaranteed.
In fact, for large electrical networks a small proportion of triangle inequalities were observed to be negative \citep{Lagonotte1989,Cotilla2012}.

Trivariate H\"usler--Reiss distributions are \EMTPtwo if and only if they comply with the metric property \cite[Example~5]{REZ2021}, which resembles the equivalence of \MTPtwo and positive association for bivariate Gaussians.
As the metric property is trivariate, this permits a simple localization such that the \textit{local metric property} holds with respect to a graph $ G=(V,\mathcal{E}) $ when for each triangle in $ G $ the corresponding marginal satisfies the metric property.
If in addition a H\"usler--Reiss distribution is an extremal graphical model we speak of a \textit{locally metrical graphical model}.

In Section~\ref{sec:estimation} we develop statistical methodology for locally metrical H\"usler--Reiss graphical models.
We adapt the two-step estimation procedure of \cite{LZ2022} and find the dual of the second step in Proposition~\ref{prop:dual}.
Our new methodology provides an estimate for $ \Gamma $ that is Markov and satisfies the metric property with respect to $ G $.
The dual for the second step gives a simple tool to minimize a surrogate reciprocal Kullback--Leibler divergence between given graph weights and the set of locally metrical models.
For electrical networks, this finds the closest resistance distance matrix that allows the interpretation as a metric.

The particularly simple structure of the dual problem enables us to implement a gradient descent algorithm for the second step in Section~\ref{sec:algorithm}.
For this, we employ the method of moving asymptotes (MMA) \citep{Svanberg2001} implemented in the \texttt{R} package \texttt{nloptr} \citep{nlopt}.
We demonstrate the performance of this algorithm on simulated data.

In Section~\ref{sec:application} we study the flight delays data set of \citet{HES2022} in the context of locally metrical H\"usler--Reiss graphical models.
We select $ d=79 $ airports with more than 2000 annual flights between 2005 and 2020 and ignore any day with missing data. This results in a data set with $ n=5347 $ daily observations of (positive) total delays.
Following \citet{HES2022}, we use the \texttt{eglearn} structure learning algorithm of \cite{ELV2021} on a training data set to estimate a sparse H\"usler--Reiss graphical model.
We compute the first step estimates and compare their H\"usler--Reiss log-likelihoods on a validation data set.
While the first step estimates are not \EMTPtwok we find that they satisfy the local metric property with respect to the estimated graph for sufficiently large regularization parameters, including the first step estimate with the highest log-likelihood on the validation data. 
This gives credit to the intuition that the local metric property is a very reasonable assumption for data sets with intrinsic local positive dependence.

The \texttt{R}-code for the simulation and application is available as a GitHub repository: \url{https://github.com/frank-unige/on_the_local_metric_property_in_multivariate_extremes}

\section{Preliminaries}
\subsection{Threshold exceedances and extremal conditional independence}\label{sec:preliminaries1}
In multivariate extremes, our interest is in the tail properties of a random vector $ \X=(X_1,\ldots,X_d) $.
When concern is in extremal dependence structures, it is common to assume that $ \X $ is multivariate regularly varying \citep{Resnick2008} and normalized to standard exponential margins \citep{HES2022}.
The distribution of the exceedances of $ \X $ over a high threshold converges to a \textit{multivariate (generalized) Pareto distribution} \citep{Rootzen2006}, that is
\begin{align}
	\PP(\Y\le \y)=\lim_{u\to\infty}\PP(\X-u\mathbf{1}\le \y | \|\X\|_{\infty}>u)\label{eq:MPD}
\end{align}
for $ \y\in\mathcal{L}:=\{\x\in\RR^d:\|\x\|_{\infty}>0\} $ where $ \mathbf{1}:=(1,\ldots,1)^T $ is the vector of ones. 
Note that $ \mathcal{L} $ is not a product space, and that the vector $ \Y $ satisfies a homogeneity property $ \PP(\Y\in t\mathbf{1}+A)=t^{-1}\PP(\Y\in A) $ for any $ t>0 $ and Borel set $ A\subset \mathcal{L} $.
Let $ \X_I $ be the $ I $th marginal of $ \X $ for a non empty index set $ I\subset [d] $.
We denote as $ \Y_I $ the limit in \eqref{eq:MPD} for $ \X_I $.
This is a slight abuse of notation for convenience as $ \Y_I $ is not the $ I $th marginal of $ \Y $, but the $ I $th marginal conditioned on an exceedance for at least one $ k\in I $, see \citet{HES2022}.
Following \citet{EH2020}, let
$\Y^k:=\Y|Y_k>0$ for $ k\in [d]:=\{1,\ldots,d\}$
be conditional random vectors with support $ \mathcal{L}^k:=\{\y\in\mathcal{L}:y_k>0\} $.
Note that the union of $ \mathcal{L}^k,\; k\in[d] $ equals $ \mathcal{L} $.
As $ \mathcal{L}^k $ are product spaces, this allows for an extremal notion of conditional independence $ \perp_e $ via
\[\Y_A\perp_e \Y_B|\Y_C\Longleftrightarrow \;\forall k\in [d]:\; \Y^k_A\indep \Y_B^k|\Y_C^k \]
for disjoint sets $ A,B,C\subset [d] $.
Extremal conditional independence is therefore equivalent to simultaneous stochastic conditional independence statements on each half-space $ \mathcal{L}^k $.
The notion of extremal conditional independence gives rise to an extremal graphical model with respect to an undirected graph $ G=(V,\mathcal{E}) $ with vertex set $ V=[d] $ and edge set $ \mathcal{E} $ by
$Y_i\perp_eY_j|\Y_{V\setminus ij}$ for all $ ij\not\in \mathcal{E},$
i.e.~for every non-edge in $ G $ we require extremal conditional independence between $ Y_i $ and $ Y_j $ given the remaining variables $ \Y_{V\setminus ij} $.
For convenience, we abbreviate index set differences of the form $ V\setminus ij $ to $ \setminus ij $ whenever the minuend set is clear from context.
By definition extremal graphical models do not allow marginal independence, so that the graphs are always connected.
The recent work of \citet{EIS2022} shows how to generalize the definition of extremal conditional independence to incorporate disconnected graph.

A useful property of the conditional random vectors $ \Y^k $ is that they allow a stochastic representation as a latent variable model
\begin{align}
	\Y^k\stackrel{d}{=}E\mathbf{1}+\W^k, \label{eq:extr_function}
\end{align}
where $ E $ is a standard exponential random variable that is independent of a $ d $-variate random vector~$ \W^k $.
It holds that $ W_k^k=0 $ almost surely and we refer to $ \W^k $ as the $ k $th extremal function.
The $ k $th extremal function uniquely characterizes a multivariate Pareto distribution, and for $ k\in C $ there is a simple equivalence such that
$\Y_A\perp_e \Y_B|\Y_C$ if and only if $ \W_A^k\indep\W_B^k|\W_C^k.$
Furthermore, extremal functions allow for a natural construction of particular multivariate Pareto distributions like the H\"usler--Reiss distribution, which we will introduce below.

\subsection{H\"usler--Reiss distributions}\label{s:hr} 
The H\"usler--Reiss distribution is a subclass of multivariate Pareto distributions that is considered as the analogue of the multivariate Gaussian in multivariate extremes, see e.g.~\citet{HR1989} or \citet{EMKS2015}.
An elegant construction of the $ d $-variate H\"usler--Reiss distribution is via the $ k $th extremal function $ \W^k $,
where we require $ \W^k_{\setminus k} $ to follow a $ (d-1) $-variate Gaussian with covariance $ \Sigma^{(k)} $ and mean vector $ -\text{diag}(\Sigma^{(k)})/2 $.
A parameterization that is independent of $ k $ is obtained via the inverse covariance mapping
\[\begin{cases}
	\Gamma_{ik}=\Sigma_{ii}^{(k)}, & i\neq k,\\
	\Gamma_{ij}=\Sigma_{ii}^{(k)}+\Sigma_{jj}^{(k)}-2\Sigma_{ij}^{(k)}& i,j\neq k,
\end{cases}\]
with $ \text{diag}(\Gamma)=\mathbf{0} $. The covariance map $ \Sigma_{ij}^{(k)}= (\Gamma_{ik}+\Gamma_{jk}-\Gamma_{ij})/2,\; i,j\in[d] $ that returns the parameter matrix $ \Sigma^{(k)} $ for any $ k\in [d] $ is known as \textit{Farris transform} in phylogenetics~\citep{SUZ2020}.
Let $ \mathcal{H}^{d-1}:=\{\x\in\RR^d:\x^T\mathbf{1}=0\} $ denote the hyperplane perpendicular to $ \mathbf{1} $.
The matrix $ \Sigma^{(k)} $ is positive definite if and only if $ \Gamma $ is strictly conditionally negative definite, i.e.~$ \x^T\Gamma\x< 0 $ for all $ \x \in \mathcal{H}^{d-1}\setminus\{\mathbf{0}\} $.
We define $ \mathcal{C}^d $ as the cone of conditionally negative definite $ d\times d $ matrices.
The matrix $ \Gamma $ is a variogram matrix that can be obtained from $ \Y $ via
\[\Gamma_{ij}=\text{Var}(Y^k_i-Y^k_j)\]
for each $ k\in[d] $.
Furthermore, when $ \Gamma $ is conditionally negative definite it is a Euclidean distance matrix, i.e.~the entries in $ \Gamma $ allow a representation as the squared distances of $ d $ points in Euclidean space, such that the matrix $ (\sqrt{\Gamma_{ij}})_{i,j\in[d]} $ defines a metric as $ \sqrt{\Gamma_{ij}}\le \sqrt{\Gamma_{ik}}+\sqrt{\Gamma_{jk}} $ for all $ i,j,k\in [d] $.

H\"usler--Reiss distributions allow a simple parametric characterization of extremal conditional independence.
In fact, let $ \Theta^{(k)}:=\left(\Sigma^{(k)}\right)^{-1} $ for each $ k\in[d] $ and define a positive semidefinite $ d\times d $-matrix $ \Theta $ so that $ \Theta_{ij}=\Theta_{ij}^{(k)} $ for $ i,j\neq k $.
It follows that
\begin{align}
	Y_i\perp_e Y_j|\Y_{\setminus ij} \Longleftrightarrow \Theta_{ij}=0. \label{eq:HRprecision}
\end{align}
We refer to $ \Theta $ as the \textit{H\"usler--Reiss precision matrix} \citep{HES2022}. 
The equivalence in \eqref{eq:HRprecision} implies that a H\"usler--Reiss graphical model equivalently imposes zeros in its precision matrix, such that problems with respect to sparsity allow a parametric reformulation.
Let $ \mathbb{S}^d_0 $ denote the set of symmetric $ d\times d $-matrices with zero diagonal and let $Q\in \mathbb{S}^d_0$ be defined by $Q_{ij}:=-\Theta_{ij}$ for all $i\neq j \in [d]$.
As $ \Theta\mathbf{1}=0 $, it follows that $ Q $ and $ \Theta $ are one-to-one with $ \Theta_{ii}=\sum_{j\neq i} Q_{ij} $.
In the context of H\"usler--Reiss graphical models the matrix $ Q $ assigns graph weights to each edge in the underlying graph and equals zero for non-edges, so that $ Q $ is a weighted adjacency matrix.
A simple H\"usler--Reiss graphical model is given in Example~\ref{ex:HR_small}.

Let $ P:=I-\frac{1}{d}\mathbf{1}\mathbf{1}^T $ and define $ \Sigma:=P(-\Gamma/2)P $.
It holds that $ \Sigma=\Theta^+ $ is the pseudo-inverse of $ \Theta $, compare also \citet{HES2022} or \cite{REZ2021}.
The matrix $ \Sigma $ is the degenerate Gaussian covariance in a stochastic representation of the conditional vector $ \widetilde{\Y}:=\Y|\{\Y^T\mathbf{1}>0\} $, that is
\begin{align}
	\widetilde{\Y}\stackrel{d}{=}E\mathbf{1}+\W \label{eq:representation_Hentschel}
\end{align}
with $ E $ standard exponential and independent of a degenerate Gaussian random vector $ \W $ with mean $ P(-\frac{\Gamma}{2d})\mathbf{1} $ and covariance $ \Sigma $ that is supported on the hyperplane $ \mathcal{H}^{d-1} $. 
We will refer to the random vector $ \W $ as \textit{extremal function}. 

\subsection{Positive association}
We call a function $f:\RR^d\to \RR$ increasing when $x_i\le x_i'$ for all $1\le i\le d$ implies that $f(\x)\le f(\x')$.
A random vector $ \X $ is \textit{positively associated} when for all increasing $ f,g $ it holds that
$$
\Cov(f(\X),g(\X))\ge 0, 
$$
whenever the covariance exists.
A multivariate Gaussian with covariance matrix $\Sigma$ is positively associated when $\Sigma_{ij}\ge 0$ for all $ i,j\in[d] $ \citep{Pitt1982}.
For further details on positive association, we refer to \citet{Esary1967} and \citet{LZ2022}.
For notational simplicity, we will denote positive association simply as \textit{association}.

\citet{LZ2022} introduced a localized version of association. 
It requires that fully connected subgroups of variables with respect to some graph are positively associated. 
For convenience, we note some basic concepts of undirected graph theory and graphical models. We refer to \citet{Lauritzen1996} for a full introduction.
We consider only \textit{undirected} graphs $G = (V, \mathcal{E})$ with vertex set $ V=[d] $, i.e.~graphs where for every edge $(i,j) \in \mathcal{E}$ we have also $(j,i) \in \mathcal{E}$. 
We call \textit{complete} a graph where all vertices are connected by an edge.
A subgraph $G_S = (V_S, \mathcal{E}_S)$ of a graph $G$ is the graph $G$ where we only keep the vertices $V_S \subset V$ and the corresponding edges $\mathcal{E}_S=\{ij\in \mathcal{E}: i,j\in V_S\}$.  
We call a complete subgraph $C=(V_C,\mathcal{E}_C)$ a \textit{clique} when it is maximal with respect to inclusion, which means that we cannot add another vertex $k \in V_{\setminus_C}$ so that the subgraph with vertex set $C \cup k$ is still complete. 
We denote by $\mathcal{C}(G)$ the set of cliques of $G$.
The definition of local association is as follows:
\begin{defn}\label{def:locasso}
	\emph{A random vector ${\bf X}$ is \textit{locally associated} with respect to a graph $G$ if the subvector $\X_C$ is associated for any clique $C\in\mathcal{C}(G)$.}
\end{defn}
Local association is a more flexible notion of positive dependence as it incorporates a graph structure. Only lower dimensional marginals corresponding to cliques are required to be associated.
As marginals of Gaussians are distributed according to a Gaussian with the corresponding submatrix as covariance, local association allows a simple parametric description for Gaussians:
\begin{example}
	A multivariate Gaussian with covariance matrix $\Sigma$ is locally associated with respect to a graph $ G $ if and only if $\Sigma_C \geq 0, \forall C \in \mathcal{C}(G)$.
\end{example}

\section{Extremal association and the local metric property}

In this section, we introduce the notion of extremal association and find a sufficient condition for this property to hold. 
Afterwards, we discuss a parametric description of the sufficient condition for H\"usler--Reiss distributions. 

\subsection{Latent variable models and extremal association}\label{sec:extr_assoc}

As discussed in Section~\ref{sec:preliminaries1}, the conditional vectors $ \Y^k:=\Y|\{Y_k>0\} $ that characterize extremal conditional independence allow a stochastic representation as a latent variable model $ E\mathbf{1}+\W^k $, where $ E $ is a standard exponential random variable that is independent of the $ k $th extremal function $ \W^k $. 
Henceforth, properties of latent variable models provide insights for multivariate Pareto distributions and assumptions on the extremal functions $ \W^k,\;k\in[d] $ will impose structure in the resulting extremal model. 

In this section, we link positive dependence properties for latent variable models with corresponding extremal properties for multivariate Pareto distributions.
Let $ \X $ be a random vector independent of a univariate random variable $ X_0 $. We call
\begin{align}
	X_0\mathbf{1}+\X \label{eq:latent_factor}
\end{align}
a \textit{latent variable model}.
Previous research investigated how notions of positive dependence for $ \X $ give rise to positive dependence for the latent variable model.
For example, \citet{Benjamini2001} showed for \textit{positive regression dependency on a subset} (PRDS) that $ \X $ being PRDS implies that $ X_0\mathbf{1}+\X $ is PRDS.
A similar result involving strongly \MTPtwo distributions can be obtained for \MTPtwo latent variable models, see \citet[Theorem~3.1]{REZ2021}.
The following result shows that a sufficient condition for latent variable models \eqref{eq:latent_factor} to be  associated is when $ \X $ is associated:
\begin{prop}\label{prop:latent_factor_assoc}
	Let $ \X $ be an associated random vector and $ X_0 $ an independent univariate random variable. Then $ 	X_0\mathbf{1}+\X $ is associated.	
\end{prop}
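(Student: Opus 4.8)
The plan is to reduce the statement to two standard closure properties of positive association due to \citet{Esary1967}: first, that the union of two mutually independent families of associated random variables is again associated; second, that any finite collection of coordinatewise increasing functions of an associated random vector is associated. Both hold under the definition of association used here, and a single random variable is trivially associated (it is a nondecreasing function of itself).

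The first step is to pass to the augmented vector $(X_0,\X)=(X_0,X_1,\ldots,X_d)$. Since $X_0$ is a single random variable and hence associated, $\X$ is associated by hypothesis, and the two are independent, the union property yields that $(X_0,\X)$ is associated. The second step is to observe that every coordinate of the latent variable model is an increasing function of this augmented vector: writing $Z_i:=X_0+X_i$, the map $(x_0,x_1,\ldots,x_d)\mapsto x_0+x_i$ is nondecreasing in each argument. Applying the closure of association under increasing functions to the $d$ maps $(x_0,\x)\mapsto x_0+x_i$, $i\in[d]$, then gives that $\Z=(Z_1,\ldots,Z_d)=X_0\mathbf{1}+\X$ is associated, which is the claim.

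The argument is short, so there is no substantial technical obstacle; the only real content is the reduction itself. The shared latent variable $X_0$ couples all coordinates and thereby injects additional positive dependence, which one might fear has to be controlled quantitatively. Encoding $X_0$ as an extra coordinate of an associated vector avoids this: the common-factor structure becomes a special case of taking increasing functions, and the two properties of \citet{Esary1967} finish the proof. The one caveat worth noting is the integrability condition in the definition of association, which is accommodated by the usual formulation of these closure properties for bounded increasing functions, equivalently whenever the relevant covariances exist.
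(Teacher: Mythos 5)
Your proof is correct and follows essentially the same route as the paper's: both arguments augment $\X$ with the coordinate $X_0$, invoke the closure of association under concatenation of independent associated vectors \citep{Esary1967}, and then use that increasing functions of an associated vector are associated. The only cosmetic difference is that you apply the increasing-function closure directly to the maps $(x_0,\x)\mapsto x_0+x_i$, whereas the paper first shows $(X_0,X_0\mathbf{1}+\X)$ is associated and then marginalizes out $X_0$; the substance is identical.
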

\begin{proof}
	As univariate random variables are always associated and compositions of independent associated random vectors are associated \citep{Esary1967}, $ (X_0,\X) $ is associated.
	Let $ \Z:=(X_0,X_0\mathbf{1}+\X) $. Any non-decreasing function $ f(\Z)=f(X_0,X_0\mathbf{1}+\X) $ corresponds to a non-decreasing function $ \tilde{f}(X_0,\X) $ with $ \tilde{f}(x_0,\x):=f(x_0,x_0\mathbf{1}+\x) $.
	Therefore $ \Z $ is associated. As association is closed under marginalization \citep{Esary1967}, the result follows.
\end{proof}

The conditional vectors $ \Y^k $ are supported on product spaces $ \mathcal{L}^{k} $, which allows for the definition of extremal conditional independence via stochastic conditional independence for all $ \Y^k,\; k\in[d] $.
In a similar fashion, \citet{REZ2021} introduce a notion of extremal \MTPtwok so that $ \Y $ is extremal \MTPtwo ($ \text{EMTP}_2 $) if and only if $ \Y^k $ is \MTPtwo for all $ k\in[d] $.
This approach leads to a definition of an extremal notion of association:
\begin{defn}
	We call a multivariate Pareto vector $ \Y $ extremal associated if and only if $ \Y^k $ is associated for each $ k\in V $.
\end{defn}
We observe that the set of extremal associated distributions contains the set of \EMTPtwo distributions.
This directly implies that many parametric families like the extremal logistic and Dirichlet distributions satisfy extremal association for every parameter \citep{REZ2021}.

Through the representation \eqref{eq:extr_function} of $ \Y^k $ as a latent variable model $ E\mathbf{1}+\W^k $, \EMTPtwo is characterized via the extremal functions, so that $ \Y $ is \EMTPtwo if and only if $ \W^k_{\setminus k} $ is \MTPtwo for each $ k\in V $.
Via Proposition~\ref{prop:latent_factor_assoc} we obtain that $ \W^k $ being associated is a sufficient condition to imply extremal association.
\begin{thm}\label{t:extr_assoc}
	When $ \W^k $ is associated for each $ k\in [d] $ it follows that $ \Y $ is extremal associated. 
\end{thm}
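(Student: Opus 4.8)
The plan is to reduce the claim directly to Proposition~\ref{prop:latent_factor_assoc} by exploiting the latent variable representation of each conditional vector $\Y^k$. Recall from \eqref{eq:extr_function} that $\Y^k \stackrel{d}{=} E\mathbf{1} + \W^k$, where $E$ is a standard exponential random variable independent of the $k$th extremal function $\W^k$. Since by hypothesis $\W^k$ is associated, I would apply Proposition~\ref{prop:latent_factor_assoc} with the associated vector taken to be $\W^k$ and the independent univariate variable taken to be $E$. This immediately yields that $E\mathbf{1} + \W^k$ is associated.

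The only point requiring care is that association is a property of the law of a random vector rather than of a particular realization, and hence is preserved under equality in distribution. I would note that for any increasing $f,g$ the quantity $\Cov(f(\cdot),g(\cdot))$ depends only on the joint distribution of the argument; therefore, once we know $E\mathbf{1} + \W^k$ is associated and $\Y^k \stackrel{d}{=} E\mathbf{1} + \W^k$, it follows that $\Y^k$ is associated as well.

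Finally, since this argument applies verbatim for every $k \in [d]$, each conditional vector $\Y^k$ is associated, which is precisely the definition of $\Y$ being extremal associated. I do not anticipate any genuine obstacle here: the substantive work has been front-loaded into Proposition~\ref{prop:latent_factor_assoc}, and the present theorem is essentially a pointwise application of that proposition to the $d$ stochastic representations in \eqref{eq:extr_function}, together with the invariance of association under equality in distribution.
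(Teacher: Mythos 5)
Your proof is correct and is exactly the argument the paper intends: the paper's own (one-line) proof likewise reduces the theorem to Proposition~\ref{prop:latent_factor_assoc} applied to the representation $\Y^k \stackrel{d}{=} E\mathbf{1}+\W^k$ from \eqref{eq:extr_function}, combined with the definition of extremal association. Your explicit remark that association is a distributional property, hence transfers across equality in distribution, simply fills in a detail the paper leaves implicit.
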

\begin{proof}
	The theorem follows from Proposition~\ref{prop:latent_factor_assoc} and the definition of extremal association.
\end{proof}
The result relies on the latent variable structure of $ \Y^k $.
As a corollary of Theorem~\ref{t:extr_assoc}, we find that extremal tree models, i.e.~multivariate Pareto distributions that are Markov to a tree, are always extremal associated.
This further shows that bivariate Pareto distributions are always extremal associated.
\begin{cor}\label{cor:trees}
	Extremal tree models are extremal associated.
\end{cor}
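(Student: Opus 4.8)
The plan is to reduce the statement to Theorem~\ref{t:extr_assoc} and then verify its hypothesis for tree models. By that theorem it suffices to prove that, for every $ k\in[d] $, the $ k $th extremal function $ \W^k $ is associated. Fix $ k $ and orient the underlying tree $ T=(V,\mathcal{E}) $ away from $ k $, so that each vertex $ i\neq k $ has a unique parent on the path toward $ k $. Since the model is Markov to $ T $, the equivalence between extremal and stochastic conditional independence recalled in Section~\ref{sec:preliminaries1} shows that $ \W^k $ is a Markov random field on $ T $ with $ W^k_k=0 $ almost surely.

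The engine of the proof is an additive independent-increment representation of $ \W^k $ along $ T $. I would show that there are mutually independent random variables $ \{U_e\}_{e\in\mathcal{E}} $, indexed by the edges, such that
\begin{align}
	W^k_i=\sum_{e\in\mathrm{path}(k,i)}U_e,\qquad i\in[d],\label{eq:cor_trees_repr}
\end{align}
where $ \mathrm{path}(k,i) $ is the edge set of the unique $ k $--$ i $ path and the empty sum gives $ W^k_k=0 $. For H\"usler--Reiss tree models this is explicit: the variogram is an additive tree metric, so $ \Sigma^{(k)}_{ij}=(\Gamma_{ik}+\Gamma_{jk}-\Gamma_{ij})/2 $ equals the total weight of the shared initial segment of the paths from $ k $ to $ i $ and to $ j $, which is exactly the covariance of a Gaussian random walk on $ T $ with independent edge increments. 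Granting \eqref{eq:cor_trees_repr}, the conclusion is immediate from the closure properties of association \citep{Esary1967}: the $ U_e $ are independent univariate random variables, hence associated, and an independent concatenation of associated vectors is associated, so $ (U_e)_{e\in\mathcal{E}} $ is associated; each $ W^k_i $ is a subset-sum of the $ U_e $, i.e.\ a coordinatewise non-decreasing function of $ (U_e)_{e\in\mathcal{E}} $, so $ \W^k $ is the image of an associated vector under a non-decreasing map and is therefore associated. As $ k $ was arbitrary, Theorem~\ref{t:extr_assoc} gives that $ \Y $ is extremal associated.

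The part that requires care -- and the main obstacle -- is establishing \eqref{eq:cor_trees_repr} for a general (non--H\"usler--Reiss) extremal tree model, i.e.\ checking that each edge transition of the Markov tree acts as a pure, independent location shift. A more robust way to organise this, which localises the difficulty to a single edge, is induction on $ |V| $ by peeling off a leaf $ \ell $ with parent $ p $: by the tree Markov property $ W^k_\ell $ depends on $ \W^k_{\setminus\ell} $ only through $ W^k_p $, so if one can write $ W^k_\ell=h(W^k_p,U_\ell) $ with $ h $ non-decreasing and $ U_\ell $ independent of $ \W^k_{\setminus\ell} $, then $ (\W^k_{\setminus\ell},U_\ell) $ is associated by the induction hypothesis together with independent concatenation, and $ \W^k $ is its non-decreasing image, hence associated by the same closure properties \citep{Esary1967}. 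This reduces everything to the stochastic monotonicity of the bivariate edge distributions, the verification of which is the crux. Finally, the claim for bivariate Pareto distributions is the special case of a single-edge tree, where $ \W^k=(0,U_e) $ up to a permutation of coordinates and association is trivial.
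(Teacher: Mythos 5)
Your overall architecture is exactly the paper's: reduce to Theorem~\ref{t:extr_assoc}, represent $ \W^k $ as path-sums of independent edge variables, and conclude via the closure properties of association (independent univariate variables are associated, independent concatenation preserves association, non-decreasing images of associated vectors are associated). However, there is a genuine gap at precisely the point you flag as ``the crux'': you never establish the additive independent-increment representation for a \emph{general} extremal tree model. You verify it only in the H\"usler--Reiss case (via the additive tree metric and the Farris transform), and your leaf-peeling induction does not close the gap either --- it merely reformulates it, since writing each edge transition as $ W^k_\ell = h(W^k_p, U_\ell) $ with $ h $ non-decreasing and $ U_\ell $ independent is essentially equivalent to the property you are trying to prove, and you give no argument that the bivariate edge kernels of a general extremal tree model are stochastically monotone, let alone pure location shifts. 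The Markov property alone gives none of this: a generic Markov random field on a tree has no reason to admit additive independent increments.

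The paper closes exactly this gap by citation: by \cite{EV2022}, for any multivariate Pareto distribution that is Markov to a tree, the extremal functions admit the representation $ W_i^k = \sum_{e\in\mathrm{ph}(ik)} W_e $, where $ \mathrm{ph}(ik) $ is the unique path from $ i $ to $ k $ and $ (W_e)_{e\in\mathcal{E}} $ are independent univariate random variables. This is a nontrivial structural theorem about extremal tree models, and it is the decisive input; once it is invoked, the remainder of your argument coincides word for word with the paper's proof. So: right strategy and correct closure-property reasoning, but the key structural fact --- that edge transitions in extremal tree models are independent location shifts --- is left unproven in the general case and must either be cited from \cite{EV2022} or proved from the factorization of tree-Markov Pareto densities.
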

\begin{proof}
	Let $ \Y $ be a $ d $-variate multivariate Pareto distribution that is Markov to a tree $ G=(V,\mathcal{E}) $.
	According to \cite{EV2022}, in this case for each $ k\in V $ the extremal function allow a representation via $ d-1 $ independent univariate random variables $ (W_e)_{e\in\mathcal{E}} $, so that
	\[W_{i}^{k}=\sum_{e\in\text{ph}(ik)}W_e, \]
	where $ \text{ph}(ik) $ is the unique path between $ i $ and $ k $.
	As univariate random variables are associated and the concatenation of independent associated random vectors is associated \citep{Esary1967}, the vector $ (W_e)_{e\in\mathcal{E}} $ is associated.
	Finally, as $ \W^k $ is a nondecreasing function of $ (W_e)_{e\in\mathcal{E}} $, we obtain that $ \W^k $ is associated, which gives the result.
\end{proof}

\subsection{Local positive dependence for H\"usler--Reiss distributions}\label{sec:loc_metr}
We will now specify the results on extremal association for the important subclass of H\"usler--Reiss distributions, which are considered to be the analogue of Gaussians among multivariate Pareto distributions.

For multivariate Gaussian distributions, association is a bivariate property that is encoded in the covariance matrix, as it is equivalent to non-negative correlations between all variables.
In fact, this means that a multivariate Gaussian $ \X $ is associated if and only if every bivariate marginal $ \X_I,\;|I|=2 $ is associated.
Furthermore, for bivariate Gaussians the notions of association and \MTPtwo are equivalent, as a $ 2\times 2 $ precision matrix is an M-matrix if and only if the corresponding covariance matrix is non-negative.
Therefore a multivariate Gaussian is associated if and only if all its bivariate marginals are \MTPtwop

Theorem~\ref{t:extr_assoc} and Corollary~\ref{cor:trees} show that any bivariate Pareto distribution is extremal associated.
This stems from the fact that a bivariate multivariate Pareto distribution is characterized by a univariate random variable via the extremal function.
For H\"usler--Reiss distributions, any bivariate marginal is even \EMTPtwo \citep{REZ2021}.
Therefore we do not expect that there exists an interesting bivariate notion of positive dependence for H\"usler--Reiss distributions.

As a $ d $-variate Pareto vector $ \Y $ is constructed from a $ (d-1) $-variate extremal function $ \W^k_{\setminus k} $, a bivariate notion of positive dependence for $ \W_{\setminus k}^k $ should induce a trivariate property for $ \Y $.
Indeed we observe such a structure for trivariate H\"usler--Reiss distributions, as for those the extremal functions are bivariate Gaussians.
\citet[Prop.~3.6]{REZ2021} showed that $ \W^k $ is associated for all $ k\in V $
if and only if the variogram matrix $ \Gamma $ is a metric, i.e.~when the triangle inequality 
\begin{equation}
	\Gamma_{ij} \leq \Gamma_{ik} + \Gamma_{jk}\label{eq:triangle_ineq}
\end{equation}
holds for all triples $ i,j,k \in V $. We say in this case that $ \Y $ satisfies the \textit{metric property} and note that the metric property implies extremal association due to Theorem~\ref{t:extr_assoc}.
For H\"usler--Reiss distributions the vector $\Y_I$ is again H\"usler--Reiss with parameter $ \Gamma_{I} $, so that the metric property is in fact a trivariate property.
This holds as $ \Gamma $ satisfies the triangle inequalities if and only if $ \Gamma_{I} $ satisfies the triangle inequalities for every $ I\subset [d] $ with $ |I|=3 $.
Note that the metric property is defined by simple linear constraints for the parameter matrix $ \Gamma $ that enforce a natural structure for Euclidean distance matrices \citep{Devriendt2022}. 
We further observe that for trivariate H\"usler--Reiss distributions the metric property is equivalent to \EMTPtwo \cite[Example~5]{REZ2021},
mirroring the link between association and \MTPtwo for bivariate Gaussians.
As the metric property is characterized through the trivariate submatrices of $ \Gamma $, this allows for a localized notion of extremal positive dependence.
\begin{defn}\label{def:extremlocmetrHRGM}
	A H\"usler--Reiss random vector ${\bf Y}$ satisfies the \textit{local metric property} with respect to the graph $G=(V,\mathcal{E})$ if the parameter matrix $\Gamma$ is such that each of its sub-matrices corresponding to the cliques of $G$ forms a metric, i.e.
	\begin{equation}\label{locmetrGamma}
		\Gamma_{ij} \leq \Gamma_{ik} + \Gamma_{jk},\quad \forall i,j,k \in C, \forall C \in \mathcal{C}(G).
	\end{equation}
	If furthermore it holds that
	\begin{equation}\label{markov}
		\Theta_{ij} = 0, \quad \forall (i,j) \notin \mathcal{E},
	\end{equation}
	where $\Theta$ is the H\"usler--Reiss precision matrix (Section~\ref{s:hr}), i.e.~when $ \Y $ is an extremal graphical model with respect to $ G $, then we call $ \Y $ a \textit{locally metrical H\"usler--Reiss graphical model}.
\end{defn}
The following example gives a locally metrical graphical model with respect to the graph in Figure~\ref{fig:graph_HR}.
\begin{example}\label{ex:HR_small}
	Consider the H\"usler--Reiss distribution that is Markov to the graph in Figure~\ref{fig:graph_HR}, which requires that its precision matrix permits a representation
	\begin{align*}
		\Theta=\begin{pmatrix}
			Q_{12}+Q_{13} & -Q_{12} & -Q_{13}& 0\\
			-Q_{12}& Q_{12}+Q_{23}+Q_{24}& -Q_{23}& -Q_{24}\\
			-Q_{13}& -Q_{23}& Q_{13}+Q_{23}+Q_{34}& -Q_{34}\\
			0& -Q_{24}& -Q_{34}& Q_{24}+Q_{34}\\
		\end{pmatrix},
	\end{align*}
	where $ Q\in\mathbb{S}_0^d $ is a weighted adjacency matrix of the graph.
	The graph contains two cliques $ \{1,2,3\} $ and $ \{2,3,4\} $, so that the local metric property holds if and only if the variogram matrix $ \Gamma $ that corresponds to $ Q $ satisfies
	\begin{align*}
		&\Gamma_{12} \le \Gamma_{13} +\Gamma_{23}, \qquad  \Gamma_{23} \le \Gamma_{24} +\Gamma_{34},\\
		&\Gamma_{13} \le \Gamma_{12} +\Gamma_{23}, \qquad  \Gamma_{34} \le \Gamma_{23} +\Gamma_{24},\\
		&\Gamma_{23} \le \Gamma_{12} +\Gamma_{13}, \qquad  \Gamma_{24} \le \Gamma_{23} +\Gamma_{34}.	
	\end{align*}
	In this case the matrix $\Gamma$ parameterizes a locally metrical H\"usler--Reiss graphical model with respect to the graph in Figure~\ref{fig:graph_HR}.
\end{example}
\begin{figure}[h!]
	\centering
	\includegraphics[trim=0mm 0mm 0mm 7mm]{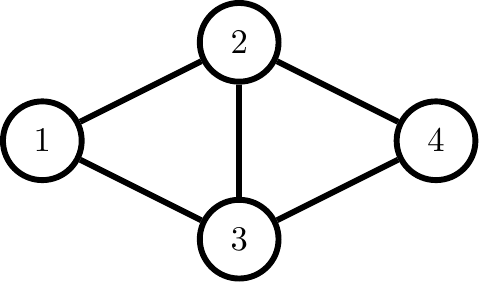}
	\caption{An undirected graph which encodes the extremal dependence structure of the H\"usler--Reiss graphical model in Example \ref{ex:HR_small}.}
	\label{fig:graph_HR}
\end{figure}

\section{Estimation for H\"usler--Reiss distributions under the local metric property}\label{sec:estimation}

In this section we develop an estimation procedure for locally metrical H\"usler--Reiss graphical models by adapting the mixed dual estimator of \citet{LZ2022} to our setting.
We begin by introducing a surrogate log-likelihood for H\"usler--Reiss distributions, which will further allow for the definition of a surrogate Kullback--Leibler divergence.

\subsection{Surrogate log-likelihood and Kullback--Leibler divergence}\label{sec:surrogate}
We study likelihood inference for H\"usler--Reiss distributions.
Following the approach in \citet{REZ2021}, we replace the H\"usler--Reiss log-likelihood with a surrogate log-likelihood in order to reduce technical difficulties.
This results in a more tractable optimization problem, compare also \citet[Section S.4.1]{HES2022}.
Let $ \Y $ be a $ d $-variate H\"usler--Reiss random vector with parameter matrix $ \Gamma $ and H\"usler--Reiss precision matrix $ \Theta $.
Consider the density of the extremal function $ \W $ in the representation $ \eqref{eq:representation_Hentschel} $, that is a degenerate Gaussian with covariance $ \Theta^+ $ and mean $ P(-\frac{\Gamma}{2d})\mathbf{1} $.
This is a degenerate $ d $-variate Gaussian distribution on $ \mathcal{H}^{d-1} $ with density
\begin{align}
	f(\x) = (2\pi)^{-d/2}\mathrm{Det}(\Theta)^{1/2} \exp\left(-\frac{1}{2} \x^T\Theta \x\right), \label{eq:degen_Gaussian}
\end{align}
where $\mathrm{Det}(\cdot)$ is the pseudo-determinant, that is the product of all non-zero eigenvalues.
The density $ f(\x) $ forms an exponential family with natural parameter $Q$, as
\begin{align}
	\x^T\Theta \x &= \sum_{i < j} Q_{ij}(x_i - x_j)^2,\qquad\textrm{Det}(\Theta)=d\sum_{T\in \mathcal{T}} \prod_{ij \in T} Q_{ij},\label{eq:matrix_tree}
\end{align}
where $\mathcal{T}$ is the set of spanning trees over the complete graph with $ d $ vertices and the right hand side in \eqref{eq:matrix_tree} follows from Kirchhoff's matrix-tree theorem.
Here, $ Q $ is the weighted adjacency matrix with zero diagonal as in Section~\ref{s:hr}, i.e.~$ Q_{ij}=-\Theta_{ij} $ for all $ i\neq j $.
Let $ \mathcal{Q}:=\{Q\in\mathbb{S}_0^d: \int_{\mathcal{H}^{d-1}}f(\x)d\x<\infty \} $ denote the natural parameter space and
\begin{align*}
	A(Q) = -\frac{1}{2} \log\left(\sum_{T\in \mathcal{T}} \prod_{ij \in T} Q_{ij}\right).
\end{align*}
the log-partition function of the exponential family.
Let $ \X $ be a random vector distributed according to $ f(\x) $, and assume a sample $ \x_1,\ldots,\x_n \in \mathcal{H}^{d-1} $ of independent observations of $ \X $. 
We obtain from \eqref{eq:matrix_tree} that the sufficient statistic is $\boldsymbol t (\x_1,\ldots,\x_n):= -\frac{1}{2}\widetilde{\Gamma} $ with
\begin{equation}\label{empvario}
	\widetilde{\Gamma}_{ij} := \frac{1}{n} 
	\sum_{\ell=1}^{n} (x_{\ell i} - x_{\ell j})^2,
\end{equation}
where we refer to $ \widetilde{\Gamma} $ as the \textit{sample variogram}.
The mean parameter of the exponential family $ f $ is the mean of the sufficient statistic
$\EE(t(\X))=-\frac{\Gamma}{2},$
where $ \Gamma $ is a one-to-one transformation of $ \Theta $ as described in Section~\ref{s:hr}.
Denote by $ M:=\{\Gamma\in\mathbb{S}_0^d: \int_{\mathcal{H}^{d-1}}f(\x)d\x<\infty \} $ the mean parameter space of the exponential family.
With the Cayley--Menger matrix
$
\CM(\Gamma)= \left(\begin{smallmatrix}
	-\frac{\Gamma}{2}&\bs 1\\
	-\bs 1^T&0\\
\end{smallmatrix}\right)
$
we then obtain that the log-partition function writes in $ \Gamma $ as 
\[A(\Gamma)=\frac{1}{2}\log\det(\CM(\Gamma)), \]
compare for example \citet{REZ2021}. 
Let $ \nabla_{\Gamma},\nabla_{Q} $ be the matrix-valued gradients for variables $ \Gamma,Q\in\mathbb{S}_0^d $.
Standard theory for exponential families gives a direct relation between the log-partition functions and the parameters via
\begin{align}
	\nabla_{\Gamma} A(\Gamma)&=	Q/2,\quad \nabla_{Q}A(Q)=-\Gamma/2, \label{eq:derivatives}
\end{align}
so that the score functions are linear in the parameters, see \citet[Prop.~A.5]{REZ2021}.

For observations of the H\"usler--Reiss vector $ \Y $ we cannot compute $ \widetilde{\Gamma} $ directly, as this would require observations of the extremal function $ \W $.
We therefore approximate $ \widetilde{\Gamma} $ with the \textit{empirical variogram} $ \overline{\Gamma} $ \citep{EV2022}, that we compute from independent observations $ \y_1,\ldots,\y_n $ of $ \Y $ as follows:
Let $ \mathcal{I}_k=\{i\in[n]:y_{ik}>0\} $ denote index sets of observations for which the $ k $th entry exceeds zero. 
When $|\mathcal{I}_k|\ge 2$, define the corresponding subset of centered observations $ (\z_i)_{i\in\mathcal{I}_k} $ via $ \z_i:=\y_{i}-\frac{1}{|\mathcal{I}_k|}\sum_{\ell\in\mathcal{I}_k}\y_{\ell} $.
We obtain a sample variogram  
\[\overline{\Gamma}^{(k)}_{ij}=\frac{1}{|\mathcal{I}_k|}\sum_{\ell\in\mathcal{I}_k} (z_{\ell i} - z_{\ell j})^2. \]
Note that this is the sample variogram of the $ k $th extremal function $ \W^k $.
For $|\mathcal{I}_k|< 2$, we set $ \overline{\Gamma}^{(k)}=\mathbf{0} $.
The empirical variogram is defined as the mean of the sample variograms for all $ k\in[d] $, that is
\[\overline{\Gamma}=\frac{1}{d}\sum_{k=1}^{d}\overline{\Gamma}^{(k)},\]
assuming that $|\mathcal{I}_k|\ge 2$ for at least one $ k\in [d] $.

The empirical variogram allows for the definition of a surrogate log-likelihood for observations of the H\"usler--Reiss vector $ \Y $.
We denote by $\langle  A,B  \rangle := \sum_{i<j} A_{ij} B_{ij}$ the inner product on $\mathbb{S}^d_0$.
The log-likelihood of the exponential family~\eqref{eq:degen_Gaussian} for a given sample variogram $ \widetilde{\Gamma} $ equals $ -\frac{1}{2}\langle \widetilde{\Gamma},Q \rangle-A(Q) $.
Given independent observations $ \y_1,\ldots,\y_n $ of $ \Y $, we approximate the sample variogram $ \widetilde{\Gamma} $ with the empirical variogram $ \overline{\Gamma} $.
This gives rise to the \textit{surrogate log-likelihood} for H\"usler--Reiss distributions that we define as
\begin{align}
	\ell(Q; \overline{\Gamma}) = \frac{1}{2}\log\left(\sum_{T\in \mathcal{T}} \prod_{ij \in T}Q_{ij}\right) - \frac{1}{2}\langle \overline{\Gamma} , Q \rangle. \label{eq:surrogate_likelihood}
\end{align}
In addition to the interpretation as a surrogate for the original log-likelihood of a H\"usler--Reiss distribution, we mention the interpretation of \eqref{eq:surrogate_likelihood} as a degenerate version of the log-determinantal Bregman divergence as in \citet{Ravikumar2011}.

Via \eqref{eq:derivatives} we obtain that the gradient of \eqref{eq:surrogate_likelihood} with respect to $Q$ is $(\Gamma-\overline{\Gamma})/2.$ 
As $ \ell(Q;\overline{\Gamma}) $ is a strictly concave function with respect to $Q$, we find that $ \Gamma=\overline{\Gamma} $ gives the unique maximizer as long as $ \overline{\Gamma}\in M $.
For an arbitrary $ \Gamma^*\in M $, the Fenchel conjugate $A^*(\Gamma^*):= \sup\{\ell (Q; \Gamma^*) : Q \in \mathbb{S}^d_0 \} $ of the log-partition function $A(Q)$ gives the value of the log-likelihood in the unique maximum.
As the unique optimizer of $\ell(Q;\Gamma^*)$ is $Q^*$ we find that
\begin{align*}
	A^*(\Gamma) &=\ell (Q; \Gamma) = - \frac{d-1}{2} -\frac{1}{2}\log\det(\CM(\Gamma)).
\end{align*}
We observe that the Fenchel conjugate $A^*(\Gamma)$ is a smooth, strictly convex function.

The surrogate log-likelihood allows for a simple form of a surrogate Kullback--Leibler divergence.
Consider two H\"usler--Reiss distributions, respectively parameterized by variogram matrices $\Gamma_1$ and $ \Gamma_2 $.
These parameters also characterize two distributions in the degenerate Gaussian exponential family~\eqref{eq:degen_Gaussian}.
Let $ f_1(\x) $ have mean parameter $ \Gamma_1 $ and $ f_2(\x) $ natural parameter $ Q_2 $.
The Kullback--Leibler (KL) divergence between these two distributions \citep[Proposition~6.3]{Brown1986} is given by
\begin{align}\label{kldivergence}
	\textrm{KL}(\Gamma_1, Q_2) &= \frac{1}{2} \langle \Gamma_1, Q_2 \rangle + A^*(\Gamma_1) + A(Q_2).
\end{align}
We consider $ \textrm{KL}(\Gamma_1, Q_2) $ as a surrogate KL-divergence for H\"usler--Reiss distributions.
If we ignore constants and summands that do not depend on $ \Gamma_1 $ and $ Q_2 $, we find that
\begin{align}
	\textrm{KL}(\Gamma_1, Q_2)
	&\;\propto\;  \langle \Gamma_1, Q_2 \rangle -\log\det(\CM(\Gamma_1)) - \log\left(\sum_{T\in \mathcal{T}} \prod_{ij \in T}(Q_2)_{ij}\right). \label{eq:KL_proportional}
\end{align}
Given that $ \langle \Gamma_1, Q_2 \rangle $ is linear in both $\Gamma_1$ and $Q_2$, that $A(Q_2)$ is strictly convex in $Q_2$ and that $A^*(\Gamma_1)$ is strictly convex in $\Gamma_1$, the surrogate KL-divergence is strictly convex in both $\Gamma_1$ and $Q_2$.
Note that $\textrm{KL}(\Gamma_1, Q_2)$ is well defined on $M \times \mathcal{Q}$ and $\textrm{KL}(\Gamma_1, Q_2) = 0$ if and only if the two distributions are the same, i.e.~$\Gamma_1 = \Gamma_2$.

\subsection{Surrogate mixed dual estimation for H\"usler--Reiss distributions}

Our interest is in parameter estimation for locally metrical H\"usler--Reiss graphical models.
This invokes constraints on both the variogram matrix $ \Gamma $ and the weighted adjacency matrix $ Q $.
In terms of the surrogate log-likelihood, this means that we impose constraints on both the natural parameter space $ \mathcal{Q} $ and the mean parameter space $ M $.
Rewriting these constraints in terms of either the natural parameter or the mean parameter leads to highly nonlinear parametric constraints and a potentially non-convex optimization problem.
We tackle this problem following the new approach of \citet{LZ2022}, which proposes to solve two consecutive convex optimization problems in terms of a mixed convex exponential family. 
This results in what they call a mixed dual estimator (MDE) that incorporates constraints on both the mean parameter and the natural parameter for exponential families.
Consider the sufficient statistics $\boldsymbol t$ as being the concatenation of two sub-vectors, so that $\boldsymbol t(\boldsymbol x) = (\boldsymbol u, \boldsymbol v)$, where $\boldsymbol u$ and $\boldsymbol v$ are of dimension $r$ and $s$ with $r+s = \binom{d}{2}$. 
Let the mean and natural parameters of the exponential family~\eqref{eq:degen_Gaussian} split accordingly, which we denote as follows:
$$Q = (Q_u, Q_v), \quad \Gamma = (\Gamma_u, \Gamma_v).$$
Hereby $(\Gamma_u, Q_v) \in M_u \times \mathcal{Q}_v$, where $M_u$ is the projection of $M$ on $\Gamma_u$ and $\mathcal{Q}_v$ the projection of $\mathcal{Q}$ on $Q_v$. 
According to \citet{BN1978}, the mixed parameterization $(\Gamma_u, Q_v)$ is a valid parameterization of the exponential family. 
We refer to \citet{BN1978} and \cite{LZ2022} for details on mixed exponential families.

The MDE introduced in \citet{LZ2022} relies on solving two consecutive convex problems. 
It minimizes the Kullback--Leibler divergence \citep{KL1951} between members of the same exponential family in two steps.
Let $ C_v\subseteq\mathcal{Q} $ denote convex constraints on $ \Gamma_{v} $ and $ C_u\subseteq M $ convex constraints on $ Q_u $.
\begin{defn}\label{def:MDE}
	Let $ \widetilde{\Gamma} $ be a sample variogram. Define a two-step estimation procedure such that
	\begin{enumerate}
		\item the first step~(S1) is defined as
		\[\widehat{Q}=\argmin_{Q \in C_v} \textrm{KL}(\widetilde{\Gamma}, Q), \] 
		\item and the second step~(S2) as
		\[\widecheck{\Gamma}=\argmin_{\Gamma\in C_u}\textrm{KL}(\Gamma, \widehat{Q}).\]
	\end{enumerate}
	Then we call $ \widecheck{\Gamma} $ a \textit{mixed dual estimator} for the convex constraints given by $ C_u $ and $ C_v $.
\end{defn}
Both steps of the mixed dual estimator are convex problems since $\textrm{KL}(\widetilde{\Gamma}, Q)$ is convex in $Q \in C_v$ and $\textrm{KL}(\Gamma, \widehat{Q})$ is convex in $ \Gamma \in C_u $.
Note that the unique optimum of step~(S1) is the maximum likelihood estimator (MLE) for the convex exponential family given by $Q \in C_v$.
\begin{remark}
	By \citet[Proposition 4.3]{LZ2022}, if the unique optimum $\widehat{Q}$ in the first step exists, then the unique optimum $\widecheck{\Gamma}$ in the second step necessarily exists. 
	In this case, \citet[Theorem 4.5]{LZ2022} gives that $\widehat{\Gamma}_u = \widetilde{\Gamma}_u \in M_u$, and  $\widecheck{Q}_v =\widehat{Q}_v \in \mathcal{Q}_v$. Furthermore, $\widecheck{\Gamma}$ is a valid parameter of the mixed exponential family.
\end{remark} 
\begin{remark}
	\citet{LZ2022} also studied the asymptotic properties of the MDE and compared them to those of the MLE. 
	Assume the MDE for the exponential family~\eqref{eq:degen_Gaussian} obtained from the mixed dual estimation procedure described in Definition~\ref{def:MDE} above with a sample of size~$n$. 
	Plus, consider the MLE of the mixed exponential family with parametrization $ (\Gamma_u,Q_v) $ under the constraints ${\Gamma_u} \in C_u$ and $Q_v \in C_v$ given data $ (\widetilde{\Gamma}_u,\widetilde{Q}_v) $. 
	It follows that the MDE $(\widecheck{\Gamma}_u,\widecheck{Q}_v)$ is $\sqrt{n}$-consistent and has the same asymptotic distribution as the MLE.
\end{remark}

\subsection{Mixed dual estimator for locally metrical H\"usler--Reiss graphical models}\label{s:2_step}
The mixed dual estimator is of particular interest for us because the surrogate likelihood of locally metrical H\"usler--Reiss graphical models allows a representation as a mixed convex exponential family.
Let $ G=(V,\mathcal{E}) $ be an undirected graph.
The constraints for the local metric property with respect to $ G $ (see Definition~\ref{def:extremlocmetrHRGM}) are given by
\begin{align*}
	C_u&=\{\Gamma\in M:\Gamma_{ij} \leq \Gamma_{ik} + \Gamma_{jk},\; \forall i,j,k \in C,\; \forall C \in \mathcal{C}(G)\},\\
	C_v&=\{Q\in\mathcal{Q}:Q_{ij}=0,\; \forall ij\not\in \mathcal{E} \}.
\end{align*}
These constraints are convex and allow for an MDE that we will now study in detail.
\subsubsection{Step~(S1)}
Let $ \y_1,\ldots\y_n $ of $ n $ be a sample of independent observations of a H\"usler--Reiss vector $ \Y $ with parameter matrix $ \Gamma $.
We approximate the sample variogram $ \widetilde{\Gamma} $ by the empirical variogram $ \overline{\Gamma} $ (see Section~\ref{sec:surrogate}), so that the optimization problem for step~(S1) is to minimize of the KL-divergence between $\overline{\Gamma}$ and the parameter space $C_v$. This simplifies to
\begin{equation}
	\begin{aligned}
		& \underset{Q \in C_v}{\text{minimize}}
		& & \langle \overline{\Gamma}, Q \rangle - \log\left(\sum_{T\in \mathcal{T}} \prod_{ij \in T}Q_{ij}\right),\\
	\end{aligned}
	\label{eq:sn4}
\end{equation}
since removing the terms constant with respect to $ Q $ does not affect the location of the optimum. 
The estimation in step~(S1) yields the surrogate MLE of a H\"usler--Reiss graphical model on a graph $G = (V,\mathcal{E})$ as in \citet[Proposition~1]{HES2022}.
Standard exponential family theory shows that the system
\begin{align}
	\begin{cases}
		\widehat{\Gamma}_{ij}=\overline{\Gamma}_{ij},\; &ij\in \mathcal{E},\\
		\widehat{Q}_{ij}=0,\; &ij \not\in \mathcal{E},
	\end{cases}\label{eq:matrix_compl}
\end{align}
constrained such that $ \widehat{\Gamma} $ is conditionally negative definite, gives the unique solution of \eqref{eq:sn4}.
We can therefore use the matrix completion procedure for variogram matrices presented by \citet{HES2022} to obtain the step~(S1) estimate.

\subsubsection{Step~(S2)}
Assume a step~(S1) estimate $\widehat{Q}$ and parametric constraints $ C_u $ as defined above.
Removing terms constant with respect to $ \Gamma $ in the KL-divergence, the minimization problem for step~(S2) is
\begin{equation}
	\begin{aligned}
		& \underset{\Gamma \in \mathcal{C}^d}{\text{minimize}}
		& & \langle  \Gamma, \widehat{Q} \rangle - \log\det(\CM(\Gamma))\\
		& \text{subject to}
		& & \Gamma_{ij} \leq \Gamma_{ik} + \Gamma_{jk} \quad \forall i,j,k \in C,\; \forall C \in \mathcal{C}(G),
	\end{aligned}
	\label{optprob}
\end{equation}
where the inequality constraints on $\Gamma$ guarantee the local metric property for H\"usler--Reiss distributions as given in \eqref{locmetrGamma}.
Note that the objective function in \eqref{optprob} is the negative of the dual or reciprocal surrogate likelihood
\begin{align}
	\widecheck{\ell}(\Gamma; \widehat{Q})&:=A^*(\Gamma)-\langle {\Gamma}, \widehat{Q} \rangle\;\propto\; \log\det(\CM(\Gamma))-\langle {\Gamma}, \widehat{Q} \rangle.\label{eq:dual_loglik}
\end{align}
Therefore, the second step~(S2) can be seen as a surrogate maximum reciprocal likelihood problem.
We will now study the optimization problem~\eqref{optprob} in detail.

\subsection{Dual of step~(S2)}
We derive the dual and the Karush--Kuhn--Tucker (KKT) conditions for the optimization problem \eqref{optprob}, where we will employ the latter to certify optimality of a solution. This relies on the theory of convex optimization, we refer to \citet{Boyd2004} for details. 

Assume without loss of generality that the graph $G=(V,\mathcal{E})$ in the optimization problem \eqref{optprob} has at least one clique of size greater or equal to three. 
This means that there is at least one triangle in the graph, where with triangles we mean all triples of nodes that form fully connected subgraphs.
Let $ \Delta $ denote the set of all distinct triangles in $ G $, so that $ i<j<k $ for each $ (i,j,k)\in\Delta $.
For each triangle $ (i,j,k)\in\Delta $, there are three triangle inequalities that we abbreviate as
\begin{align*}
	g_{ijk} &:= \Gamma_{ij} -\Gamma_{ik} -\Gamma_{jk} \leq 0,\\
	g_{ikj} &:= \Gamma_{ik} -\Gamma_{ij} -\Gamma_{jk}\leq 0,\\
	g_{jki} &:= \Gamma_{jk} -\Gamma_{ij} -\Gamma_{ik}\leq 0.
\end{align*}
Therefore the total number of constraints that encode the local metric property for a graph $G$ equals $ m=3|\Delta| $, and  $ g $ is a vector in $ \RR^m $. We enumerate the entries of $ g $ in lexicographic order according to the index set $ \Psi=\bigcup_{(i,j,k)\in\Delta}\{(i,j,k),(i,k,j),(j,k,i)\} $.

The KKT multipliers are constants appearing in the Lagrangian function of the optimization problem, where each multiplier corresponds to an inequality constraint.
Let therefore $ \boldsymbol{\eta}\in\RR^m $ denote the vector of KKT multipliers.
We enumerate the entries of $ \boldsymbol{\eta} $ according to $ \Psi $.
The following proposition gives the dual problem of \eqref{optprob} in terms of the KKT multipliers $ \boldsymbol{\eta} $:
\begin{prop}\label{prop:dual}
	The dual problem of the step~(S2) optimization problem \eqref{optprob} is
	\begin{align}
		\widecheck{\boldsymbol{\eta}}=\argmax_{\boldsymbol{\eta}\ge \mathbf{0}}\; \log\left(\sum_{T\in \mathcal{T}} \prod_{ij \in T}Q_{ij}(\boldsymbol{\eta})\right)+(d-1).\label{eq:dualproblem}
	\end{align}
\end{prop}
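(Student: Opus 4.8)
The plan is to obtain \eqref{eq:dualproblem} as the Lagrangian dual of the convex program \eqref{optprob}. First I would attach a multiplier $\eta_\psi\ge 0$ to each constraint $g_\psi(\Gamma)\le 0$, $\psi\in\Psi$, and form the Lagrangian
\[
L(\Gamma,\boldsymbol{\eta})=\langle\Gamma,\widehat{Q}\rangle-\log\det(\CM(\Gamma))+\sum_{\psi\in\Psi}\eta_\psi\,g_\psi(\Gamma),\qquad \boldsymbol{\eta}\ge\mathbf{0}.
\]
Since each $g_\psi$ is \emph{affine} in the entries of $\Gamma$, the multiplier term is linear in $\Gamma$ and can be folded into the inner product on $\mathbb{S}_0^d$: collecting, for each edge $ij$, the signed contributions of the multipliers attached to the triangles through $ij$ into a matrix $R(\boldsymbol{\eta})\in\mathbb{S}_0^d$ that depends linearly on $\boldsymbol{\eta}$, one gets $\sum_{\psi}\eta_\psi g_\psi(\Gamma)=\langle\Gamma,R(\boldsymbol{\eta})\rangle$. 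Writing $Q(\boldsymbol{\eta}):=\widehat{Q}+R(\boldsymbol{\eta})$, which is exactly the weighted adjacency matrix $Q_{ij}(\boldsymbol{\eta})$ appearing in \eqref{eq:dualproblem}, the Lagrangian collapses to $L(\Gamma,\boldsymbol{\eta})=\langle\Gamma,Q(\boldsymbol{\eta})\rangle-\log\det(\CM(\Gamma))$, so the dual function is the inner infimum $d(\boldsymbol{\eta})=\inf_{\Gamma\in\mathcal{C}^d}L(\Gamma,\boldsymbol{\eta})$.

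Next I would solve this unconstrained minimization over $\Gamma$. The objective is strictly convex because $A^*(\Gamma)=-\tfrac{d-1}{2}-\tfrac12\log\det(\CM(\Gamma))$ is strictly convex, so the minimizer is unique and determined by the stationarity condition $\nabla_\Gamma L=Q(\boldsymbol{\eta})-\nabla_\Gamma\log\det(\CM(\Gamma))=\mathbf{0}$. Using \eqref{eq:derivatives}, which gives $\nabla_\Gamma\log\det(\CM(\Gamma))=2\nabla_\Gamma A(\Gamma)=Q$, this says precisely that the optimal $\Gamma$ is the mean parameter whose associated natural parameter is $Q(\boldsymbol{\eta})$, i.e.\ $\Gamma$ and $Q(\boldsymbol{\eta})$ form a Fenchel-dual pair in the exponential family \eqref{eq:degen_Gaussian}.

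It then remains to evaluate $L$ at this stationary point. For a dual pair, the Fenchel--Young equality coming from $A^*(\Gamma)=\sup_Q[-\tfrac12\langle\Gamma,Q\rangle-A(Q)]$ reads $\langle\Gamma,Q(\boldsymbol{\eta})\rangle=-2A^*(\Gamma)-2A(Q(\boldsymbol{\eta}))$. Substituting this together with $\log\det(\CM(\Gamma))=-2A^*(\Gamma)-(d-1)$ cancels the two $A^*(\Gamma)$ terms and leaves $d(\boldsymbol{\eta})=-2A(Q(\boldsymbol{\eta}))+(d-1)$; inserting $A(Q)=-\tfrac12\log(\sum_{T\in\mathcal{T}}\prod_{ij\in T}Q_{ij})$ from \eqref{eq:matrix_tree} yields $d(\boldsymbol{\eta})=\log\big(\sum_{T\in\mathcal{T}}\prod_{ij\in T}Q_{ij}(\boldsymbol{\eta})\big)+(d-1)$, and maximizing over $\boldsymbol{\eta}\ge\mathbf{0}$ is exactly \eqref{eq:dualproblem}. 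Strong duality (so that the dual optimum certifies the primal) I would justify by Slater's condition: the constraints are affine and the matrix with all off-diagonal entries equal to a common positive constant is strictly conditionally negative definite and satisfies every triangle inequality strictly, hence lies in the interior of $C_u$.

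The main obstacle is not the algebraic collapse above but controlling the effective domain of the dual. The inner infimum is finite and attained only when $Q(\boldsymbol{\eta})\in\mathcal{Q}$, equivalently when the spanning-tree polynomial $\sum_{T}\prod_{ij\in T}Q_{ij}(\boldsymbol{\eta})$ is positive so that $\CM(\Gamma)$ remains nonsingular along the minimizing path. I would therefore have to check that $R(\boldsymbol{\eta})$ keeps $Q(\boldsymbol{\eta})$ in $\mathcal{Q}$ on the relevant region and that the supremum in \eqref{eq:dualproblem} is consequently attained; outside this region the dual objective is $-\infty$, while concavity of the log spanning-tree polynomial together with $\boldsymbol{\eta}\ge\mathbf{0}$ keeps the maximization well posed. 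Establishing this finiteness and attainment is where the genuine work lies.
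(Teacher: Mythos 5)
Your proposal is correct and follows essentially the same route as the paper's proof: form the Lagrangian, absorb the affine constraint terms into $Q(\boldsymbol{\eta})=\widehat{Q}+\sum_{(i,j,k)\in\Psi}\eta_{ijk}\nabla_{\Gamma}g_{ijk}$, identify the inner minimizer via stationarity as the mean parameter dual to $Q(\boldsymbol{\eta})$, evaluate the dual function through exponential-family conjugacy to get $\log\bigl(\sum_{T\in\mathcal{T}}\prod_{ij\in T}Q_{ij}(\boldsymbol{\eta})\bigr)+(d-1)$, and invoke strong duality for affine inequality constraints. The finiteness-and-attainment issue you flag at the end as ``where the genuine work lies'' is precisely the point the paper dispatches by citing Proposition~4.3 of \citet{LZ2022}, which guarantees that the inner infimum over $\Gamma\in\mathcal{C}^d$ exists and is attained at the unique stationary point.
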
	
A proof of Proposition~\ref{prop:dual} is available in Appendix~\ref{app:pr_dual}.
The KKT conditions allow to certify optimality:
\begin{thm}
	A point $ (\Gamma(\widecheck{\boldsymbol{\eta}}),\widecheck{\boldsymbol{\eta}}) $ is the unique solution of \eqref{infsup} if and only if
	\begin{enumerate}
		\item[(i)] $ \Gamma_{ij}(\widecheck{\boldsymbol{\eta}})\le \Gamma_{ik}(\widecheck{\boldsymbol{\eta}})+\Gamma_{jk}(\widecheck{\boldsymbol{\eta}}) $ for all $ (i,j,k)\in \Psi $,
		\item[(ii)] $ \widecheck{\boldsymbol{\eta}}\ge \mathbf{0} $,
		\item[(iii)] $ \sum_{(i,j,k)\in \Psi}\widecheck{\eta}_{ijk}(\Gamma_{ij}(\widecheck{\boldsymbol{\eta}})- \Gamma_{ik}(\widecheck{\boldsymbol{\eta}})-\Gamma_{jk}(\widecheck{\boldsymbol{\eta}}))=0. $
	\end{enumerate}
\end{thm}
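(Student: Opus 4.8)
The plan is to read problem~\eqref{optprob} as a convex program with a strictly convex objective and purely affine inequality constraints, and to recognise the saddle--point problem~\eqref{infsup} as the min--max of its Lagrangian. For such a program the Karush--Kuhn--Tucker conditions are both necessary and sufficient for optimality (cf.~\citet{Boyd2004}), and the three listed conditions are exactly primal feasibility, dual feasibility and complementary slackness. The strategy is therefore to show that the fourth KKT condition, stationarity, is already built into the pair $(\Gamma(\widecheck{\boldsymbol{\eta}}),\widecheck{\boldsymbol{\eta}})$, and then to invoke strong duality to conclude the equivalence.

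First I would recall the Lagrangian underlying Proposition~\ref{prop:dual},
\[
L(\Gamma,\boldsymbol{\eta}) = \langle \Gamma,\widehat{Q}\rangle - \log\det(\CM(\Gamma)) + \sum_{(i,j,k)\in\Psi}\eta_{ijk}\,g_{ijk}(\Gamma),
\]
and observe that for each fixed $\boldsymbol{\eta}\ge\mathbf{0}$ the map $\Gamma\mapsto L(\Gamma,\boldsymbol{\eta})$ is strictly convex on the open cone $\mathcal{C}^d$, since $A^*(\Gamma)\propto-\log\det(\CM(\Gamma))$ is strictly convex (Section~\ref{sec:surrogate}) and the remaining terms are linear in $\Gamma$. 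Hence its minimiser $\Gamma(\boldsymbol{\eta})$ is unique and satisfies $\nabla_\Gamma L(\Gamma(\boldsymbol{\eta}),\boldsymbol{\eta})=\mathbf{0}$ by construction; using~\eqref{eq:derivatives} this stationarity equation is precisely the relation defining the weighted adjacency matrix $Q_{ij}(\boldsymbol{\eta})$ appearing in~\eqref{eq:dualproblem}. Thus the stationarity KKT condition holds automatically for $(\Gamma(\widecheck{\boldsymbol{\eta}}),\widecheck{\boldsymbol{\eta}})$, leaving only conditions (i)--(iii) to characterise optimality.

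Next I would establish strong duality between~\eqref{optprob} and~\eqref{eq:dualproblem}. Because every constraint $g_{ijk}\le 0$ is affine, the refined Slater condition for affine inequality constraints applies: it suffices to exhibit a single feasible point in the relative interior of the domain $\mathcal{C}^d$, and any strictly metric conditionally negative definite variogram (for instance a generic Euclidean distance matrix scaled into the cone) provides one. Strong duality together with convexity then gives, by the standard saddle--point characterisation of KKT systems~\citep{Boyd2004}, that $(\Gamma,\boldsymbol{\eta})$ solves~\eqref{infsup} if and only if it satisfies stationarity together with (i), (ii) and (iii). Combining this with the previous paragraph yields the stated equivalence, and uniqueness of the primal part follows from strict convexity of the objective over the convex feasible set, the corresponding saddle point being singled out through the stationarity map.

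I expect the main obstacle to be twofold. The first difficulty is the constraint qualification and the well--posedness of the inner minimisation: one must confirm that a strictly feasible conditionally negative definite $\Gamma$ exists and that $\Gamma(\boldsymbol{\eta})$ genuinely attains its minimum inside the \emph{open} cone $\mathcal{C}^d$ for the relevant multipliers, which amounts to a coercivity argument for $-\log\det(\CM(\Gamma))$ along the directions compatible with the constraints. The second, more delicate point is uniqueness of the multiplier $\widecheck{\boldsymbol{\eta}}$, since strict convexity of the objective alone pins down only the optimal $\Gamma$; securing it may require exploiting the explicit structure of~\eqref{eq:dualproblem} or verifying that the gradients of the active triangle inequalities are linearly independent at the optimum.
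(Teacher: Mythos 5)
Your proposal is correct and follows essentially the same route as the paper: the paper gives no separate proof of this theorem, relying on exactly the machinery you describe --- the Lagrangian from the proof of Proposition~\ref{prop:dual}, strong duality via the (weak) Slater condition for affine inequality constraints, and the observation that stationarity is absorbed into the construction of $\Gamma(\boldsymbol{\eta})$ through \eqref{sollagrangian} --- so that (i)--(iii) are precisely the remaining KKT conditions (primal feasibility, dual feasibility, complementary slackness). Your closing caveat about uniqueness of the multiplier $\widecheck{\boldsymbol{\eta}}$ flags a point the paper itself glosses over, so your treatment is, if anything, slightly more careful than the original.
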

Note that the unique solution of \eqref{infsup} is equivalent to the unique solution of the primal problem \eqref{optprob}, hence certifying optimality for the MDE.
\begin{remark}
	As mentioned previously, since the optimization problem in step~(S2) is a convex problem with only affine inequality constraints, strong duality holds. 
	This means that the duality gap, which is the difference between the two objective functions in (\ref{optprob}) and \eqref{eq:dualproblem} for the same evaluation point is 
	\begin{equation}
		\langle  \Gamma(\boldsymbol{\eta}), \widehat{Q} \rangle - \log\det(\CM(\Gamma(\boldsymbol{\eta}))) - \log\left(\sum_{T\in \mathcal{T}} \prod_{ij \in T}Q_{ij}(\boldsymbol{\eta})\right)- (d-1)=	 \langle \Gamma(\boldsymbol{\eta}), \widehat{Q}  \rangle - (d-1).\label{eq:dual_gap}
	\end{equation}
	By definition, the duality gap for convex problems must be zero at the optimum $\widecheck{\boldsymbol{\eta}}$. We will therefore be able to track the duality gap and thus to have a necessary condition for convergence of our optimization algorithm.
\end{remark}

\section{An algorithm for the two-step estimator}\label{sec:algorithm}

Our objective in this section is to develop an algorithm that solves the two-step optimization problem described in Section~\ref{s:2_step} given a graph $ G=(V,\mathcal{E}) $ and an empirical variogram $ \overline{\Gamma} $.
As step~(S1) corresponds to the surrogate maximum likelihood estimator for the H\"usler--Reiss graphical model with respect to $ G $, one can employ for example the matrix completion algorithm of \citet{HES2022} to find $ \widehat{Q} $.
For the optimization problem in step~(S2), we employ a gradient descent algorithm for the dual problem~\eqref{eq:dualproblem}. 
It returns the MDE $\widecheck{\boldsymbol{\eta}}$, whose related parameter $\widecheck{\Gamma}$ parameterizes a H\"usler--Reiss graphical model that satisfies the local metric property with respect to the graph~$G$.

\subsection{An algorithm for the optimization problem in step~(S1)}\label{1step1}
The optimizer of the problem \eqref{eq:sn4} for step~(S1) is the surrogate maximum likelihood estimator for a H\"usler--Reiss graphical model with respect to the graph $ G $. 
For this problem, \citet{HES2022} developed a matrix completion algorithm that solves \eqref{eq:matrix_compl} for a given sample variogram and graph. 
An implementation of the algorithm is available in the R package \texttt{graphicalExtremes} in the function \texttt{complete\_Gamma}. 
The algorithm returns an estimate $\widehat{\Gamma}$ from which we compute the starting point $ \widehat{Q} $ for step~(S2).

\subsection{An algorithm for the optimization problem in step~(S2)}
The second step gives the MDE for locally metrical H\"usler--Reiss graphical models. 
We propose an optimization algorithm on the dual problem \eqref{eq:dualproblem} which sequentially updates the parameter vector $\boldsymbol{\eta}$ using the method of moving asymptotes (MMA) \citep{Svanberg2001} in the R package \texttt{nloptr} \citep{nlopt}. 
The MMA algorithm requires to evaluate the following functions for a given $\boldsymbol{\eta}$: 
\begin{enumerate}
	\item the objective function of \eqref{eq:dualproblem} to minimize, 
	\item its gradient with respect to $\boldsymbol{\eta}$.
\end{enumerate}

In order to improve the efficiency of our algorithm by avoiding loops when evaluating the summation in equation \eqref{sollagrangian}, we vectorize it. 
First, consider the vector $\overrightarrow{Q} \in \mathbb{R}^{d(d-1)/2}$ which is the vectorized analogue of $Q \in \mathbb{S}^{d}_0$ obtained by stacking the upper triangular elements of $Q$ in lexicographic order. 
The vector $\overrightarrow{\widehat{Q}}$ is defined analogously. We can consider the following example:
\begin{equation*}
	Q =\begin{bmatrix}
		0& Q_{12}& Q_{13}& Q_{14}\\
		Q_{12}& 0& Q_{23}& Q_{24}\\
		Q_{13}& Q_{23}& 0& Q_{34}\\
		Q_{14}& Q_{24}& Q_{34}& 0
	\end{bmatrix} \in \mathbb{S}^4_0 \quad
	\longrightarrow \quad \overrightarrow{Q} = \begin{bmatrix}
		Q_{12}\\
		Q_{13}\\
		Q_{14}\\
		Q_{23}\\
		Q_{24}\\
		Q_{34}
	\end{bmatrix}.
\end{equation*}
The vectorized version of the second summand of the right hand side in \eqref{sollagrangian} is 
\begin{equation}\label{vectorize}
	\begin{bmatrix} 
		\sum_{(i,j,k) \in \Psi} \eta_{ijk} \frac{\partial g_{ijk}}{\partial \Gamma_{12}}\\  
		...\\  
		\sum_{(i,j,k) \in \Psi} \eta_{ijk} \frac{\partial g_{ijk}}{\partial \Gamma_{(d-1)d}} 
	\end{bmatrix} \in \mathbb{R}^{d(d-1)/2}.
\end{equation}
This requires to evaluate the triangle inequality constraint for all triples $ (i,j,k) \in \Psi $. 
Each element of the vector in \eqref{vectorize} is a scalar product of ${\boldsymbol \eta}$ and a partial derivative of the vector $ g $, that is a vector with entries equal to 1, 0, or -1. 
Therefore, we can rewrite the vector in \eqref{vectorize} as the matrix product of a matrix $ \A(G) \in \mathbb{R}^{d(d-1)/2 \times m}$ and the vector $\boldsymbol{\eta} \in \mathbb{R}^m$.
Let $ g $ be the vector of constraints $ g_{ijk} $ in lexicographic order.
The rows of $\A(G)$ are indexed by all pairs $ i<j $, so that the $ ij $-th row equals $ A_{ij}(G)=(\frac{\partial}{\partial \Gamma_{ij}}g)^T $.
It follows that
\begin{equation}
	\begin{bmatrix} 
		\sum_{(i,j,k) \in \Psi} \eta_{ijk} \frac{\partial g_{ijk}}{\partial \Gamma_{12}}\\  
		...\\  
		\sum_{(i,j,k) \in \Psi} \eta_{ijk} \frac{\partial g_{ijk}}{\partial \Gamma_{(d-1)d}} 
	\end{bmatrix} = \A(G) \boldsymbol{\eta}.
\end{equation}
Note that the $\A(G)$ matrix is fully determined for a given graph $G$. For ease of notation, we will refer to it as $\A$ instead of $\A(G)$ since the graph in question is clear.
\begin{example}
	Consider the graphical structure in Figure \ref{fig:graph_HR} and consider a H\"usler--Reiss graphical model $\boldsymbol Y$ on the graph $G$ parameterized by the variogram matrix $\Gamma$. Then, the related $A$ matrix and the vector ${\boldsymbol \eta}$ are given by
	$$\A =
	\begin{pmatrix}
		&1& -1& -1&0& 0& 0&  \\
		&-1& 1& -1&0& 0& 0&  \\
		&0&0& 0& 0& 0& 0&  \\
		&-1& -1& 1&  1&-1& -1 \\
		&0& 0& 0& -1& 1& -1 \\
		&0& 0& 0& -1&-1& 1
	\end{pmatrix}, \quad
	{\boldsymbol \eta} =
	\begin{bmatrix}
		\eta_{123}\\ \eta_{132}\\ \eta_{231}\\
		\eta_{234}\\ \eta_{243}\\ \eta_{342}
	\end{bmatrix},
	$$
	where the rows of $A$ correspond to all elements $\Gamma_{12},...,\Gamma_{34}$ of the $\Gamma$ matrix, and each column to a certain triangle inequality constraint $g_{ijk}$. 
	For example, the first column represents the constraint $g_{123}:= \Gamma_{12} - \Gamma_{13} - \Gamma_{23} \leq 0$, and the three first columns represents the inequality constraints necessary to have an associated marginal graphical model $\Y_{123}$ on the triangular graph $G_{123}$.
\end{example}
We can now express equation (\ref{sollagrangian}) in its vectorized form
\begin{equation*}
	\overrightarrow{Q}(\boldsymbol{\eta}) = \overrightarrow{\widehat{Q}} +\A\boldsymbol{\eta}, 
\end{equation*}
and we can evaluate the objective function of the dual problem \eqref{eq:dualproblem} for a given vector $\boldsymbol{\eta}$.
Therefore, we can provide for a given $\boldsymbol{\eta}$: the objective function of the dual problem \eqref{eq:dualproblem} and its gradient with respect to $ \boldsymbol{\eta}$ that is
\begin{equation}\label{eq:gradient}
	\nabla_{\boldsymbol{\eta}}\left[\log\left(\sum_{T\in \mathcal{T}} \prod_{ij \in T}Q_{ij}(\boldsymbol{\eta})\right)+(d-1)\right]=\A^T \overrightarrow{\Gamma(\boldsymbol{\eta})},
\end{equation}
and the inequality constraint which is simply $\boldsymbol{\eta} \geq {\boldsymbol 0}$.
\begin{algorithm} This algorithm solves the dual problem as given in Proposition~\ref{prop:dual}.
	\label{alg:mpHR}
	$ $ \\[1ex]
	\textbf{Input:} {A variogram $ \widehat{\Gamma} $ and a graph $ G $.}
	
	\noindent
	\textbf{Initialize:} Compute the initial value $ \widehat{Q} $ and the constraint matrix $ \A $.
	Define the objective function as in \eqref{eq:dualproblem}, the nonnegativity constraints $ \boldsymbol{\eta}\ge \mathbf{0} $ and the gradient of the objective function \eqref{eq:gradient}.
	
	\noindent
	\textbf{Computation:} Solve the optimization problem in $ \boldsymbol{\eta} $ using the \texttt{MMA} algorithm.
	
	\noindent
	\textbf{Output: }{The optimal variogram under the local metric property.}
\end{algorithm}

\subsection{Performance}

We study the performance of Algorithm~\ref{alg:mpHR} on simulated data.
For this, we sample $ n_{\text{sim}}=100 $ times a variogram matrix $ \overline{\Gamma} $ and a sparse connected graph for $ d\in\{20,50,100\} $ vertices, so that we obtain a step~(S1) estimate via the function \texttt{complete\_Gamma} from the \texttt{R} package \texttt{graphicalExtremes}.
For the variogram matrix, we use that any variogram matrix is a Euclidean distance matrix.
Therefore we sample uniformly $ d $ points on the $ (d-1) $-dimensional sphere, and compute the Euclidean distance matrix from these points.
Afterwards, we generate a sparse connected graph that contains at least one triangle using the function \textit{generate\_random\_connected\_graph} from the \texttt{R} package \texttt{graphicalExtremes}, and compute the step~(S1) estimate $ (\widehat{\Gamma},\widehat{Q}) $ as explained above.
In Table~\ref{tab:sim_study} we show the rounded average values from $ n_{\text{sim}}=100 $ simulations for the number of inequalities for the local metric property, that is the dimension of the dual problem, and the proportion of inequalities that hold already for the step~(S1) estimate.
Furthermore, we give the average of the duality gaps \eqref{eq:dual_gap}.
We observe that the algorithm is very fast for a dual dimension of $ m=468 $, and converges with good precision in less than 15 seconds even for an average dual dimension of $ m=3844 $.
The simulations were conducted on a standard laptop.
\begin{table}[ht]
	\centering
	\begin{tabular}{rrrr}
		\hline
		$ d $& $ 20 $ & $ 50 $ & $ 100 $ \\ 
		\hline
		Dual dimension & 29.79 & 468.30 & 3844.26 \\ 
		Start proportion (in \%) & 90.06 & 94.85 & 96.61 \\ 
		Duality gap & -2.20e-09 &-2.79e-08 &-1.08e-06\\
		Time (in s) & 0.05 & 1.04 & 14.95 \\ 
		\hline
	\end{tabular}
	\caption{Averaged results for $ n_{\text{sim}}=100 $ simulated graphical models.}\label{tab:sim_study}
\end{table}

\section{Application: Flight delays data}\label{sec:application}

We study the flight delays data set of \citet{HES2022}. The data set contains (positive) total daily delays in minutes for airports in the United States between 2005 and 2020.
As extreme delays have a variety of negative impacts, our interest is in modeling and estimating the extremal conditional independence structure.
Due to the network structure of air traffic systems we expect local positive dependence for highly connected parts of the network.

\subsection{Data}
The pre-processed flight delays data set is available in the \texttt{R} package \texttt{graphicalExtremes} \citep{graphicalExtremespackage}.
In our study we include all airports with at least 2000 outgoing and incoming flights per year, so that we consider data for $ d=79 $ airports.
These airports are displayed on the map in Figure~\ref{fig:airports}.
\begin{figure}
	\includegraphics[scale=0.7]{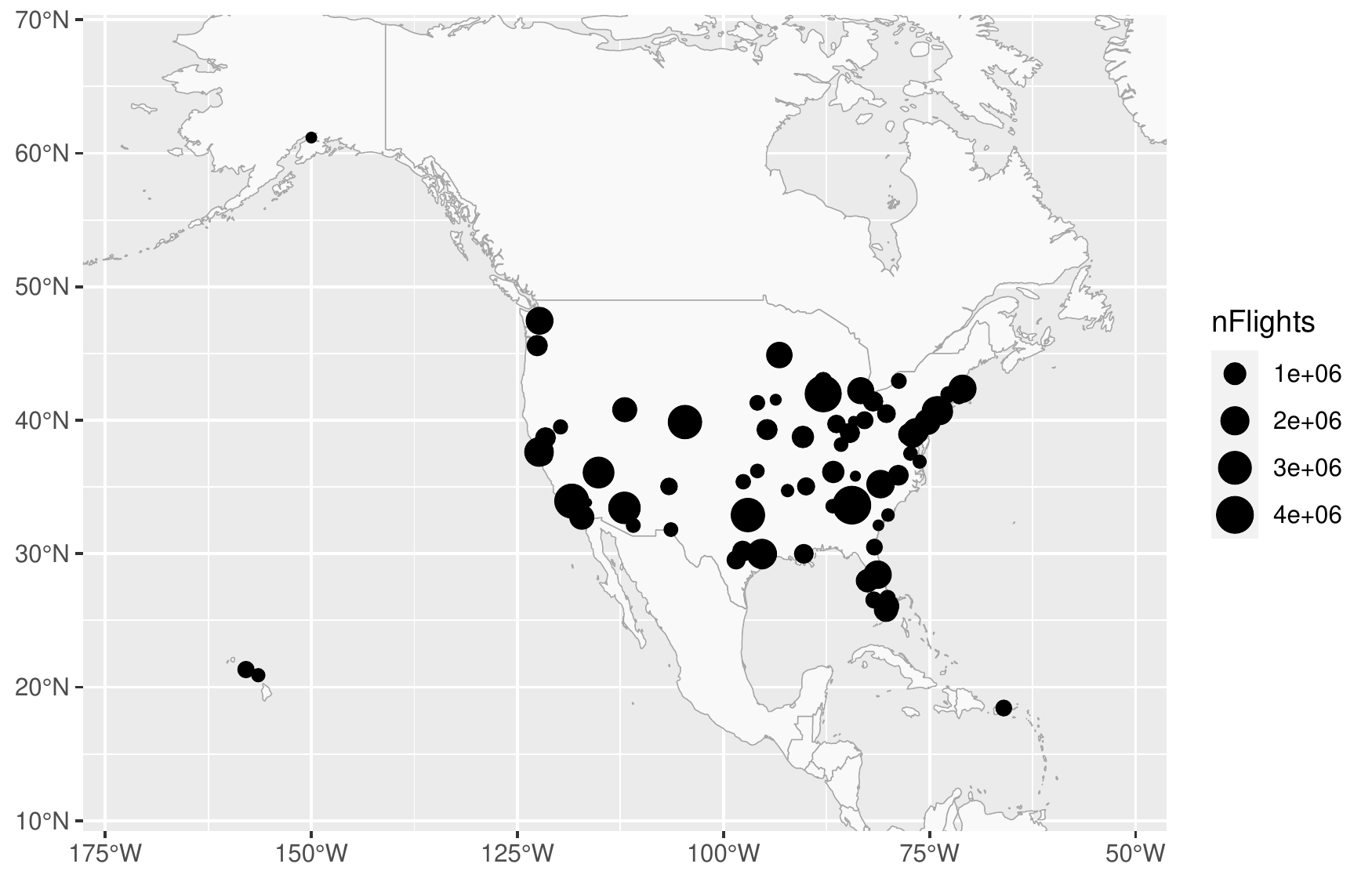}
	\caption{Airports with more than 2000 incoming and outgoing flights per year, with total number of flights indicated by dot size.}\label{fig:airports}
\end{figure}
We remove any day with missing data, which amounts to $ n=5347 $ observations.
The data is split into a training data set for delays between 2005 and 2010 and a validation data set for delays between 2011 and 2020.
We compute the empirical variogram $ \overline{\Gamma} $ for a probability threshold of $ p=0.85 $ for the training data.

\subsection{Exploratory analysis}
In exploratory analysis under the assumption of a H\"usler--Reiss distribution, we find that the empirical variogram satisfies the triangle inequalities for all but 90 of the $ 237237 $ inequalities ($ \approx99,96\% $), which gives good evidence for intrinsic local positivity in the data.
In comparison, only 53,78\% of the non-diagonal entries of the empirical H\"usler--Reiss precision matrix $ \overline{\Theta} $ are non-positive, so that the assumption of \EMTPtwo could be too strong for this data set.

\subsection{Learning locally metrical H\"usler--Reiss graphical models}
To apply our two-step algorithm to learn locally metrical H\"usler--Reiss graphical models for the flight delay data set, we first require a graph estimate for the graphical model.
Therefore we use the \texttt{eglearn} algorithm of \citep{ELV2021} for graph structure learning, i.e.~to find a list of graph estimates given a list of regularization parameters.
For each regularization parameter in \texttt{eglearn}, we compute the first step estimate $ (\widehat{\Gamma},\widehat{Q}) $ for the corresponding graph estimate using the \texttt{complete\_Gamma} algorithm from the \texttt{R} package \texttt{graphicalExtremes} \citep{graphicalExtremespackage}.
We apply Algorithm~\ref{alg:mpHR} to obtain the second step estimate $ (\widecheck{\Gamma},\widecheck{Q}) $ to enforce the local metric property with respect to the corresponding graph.

Both the step~(S1) and step~(S2) estimates are evaluated via their H\"usler--Reiss log-likelihood on the validation data for all regularization parameters $ \rho=0,0.01,\ldots,0.19,0.2 $, see Figure~\ref{fig:plot_lik}.
For $ 0\le\rho\le 0.06 $, we observe that enforcing the local metric property only marginally decreases the log-likelihood on the validation data.
Furthermore, we find that for all $ \rho\ge 0.07 $ the step~(S1) estimates already satisfy the local metric property with respect to the corresponding graph estimates. 
\begin{figure}
	\includegraphics[scale=.7]{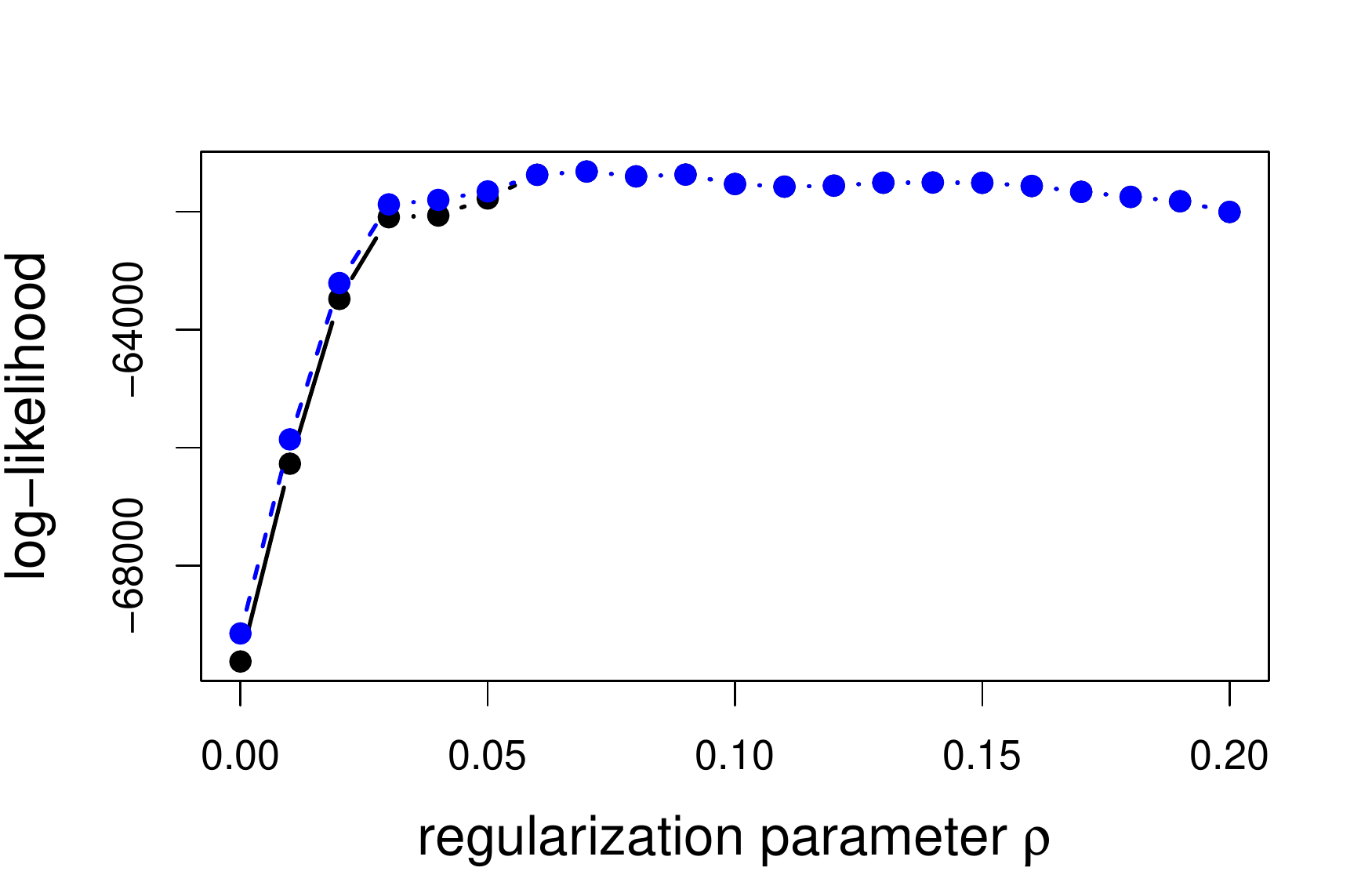}
	\caption{H\"usler--Reiss log-likelihood on the validation data for the step~(S1) estimates (blue dots, connected by dashed lines) and step~(S2) estimates (black, solid lines) for regularization parameters $ \rho = 0,0.01,\ldots,0.19,0.2 $. The step~(S1) estimate for $ \rho=0 $ is the empirical variogram for the training data. The step~(S2) estimate is identical to the step~(S1) estimates for all $ \rho\ge 0.07 $.}\label{fig:plot_lik}
\end{figure}
This includes the estimate with the highest log-likelihood on the validation data at $ \rho=0.08 $.
A map with the graph estimate for $ \rho=0.08 $ is shown in Figure~\ref{fig:best_graph}.
\begin{figure}[h]
	\includegraphics[scale=0.7]{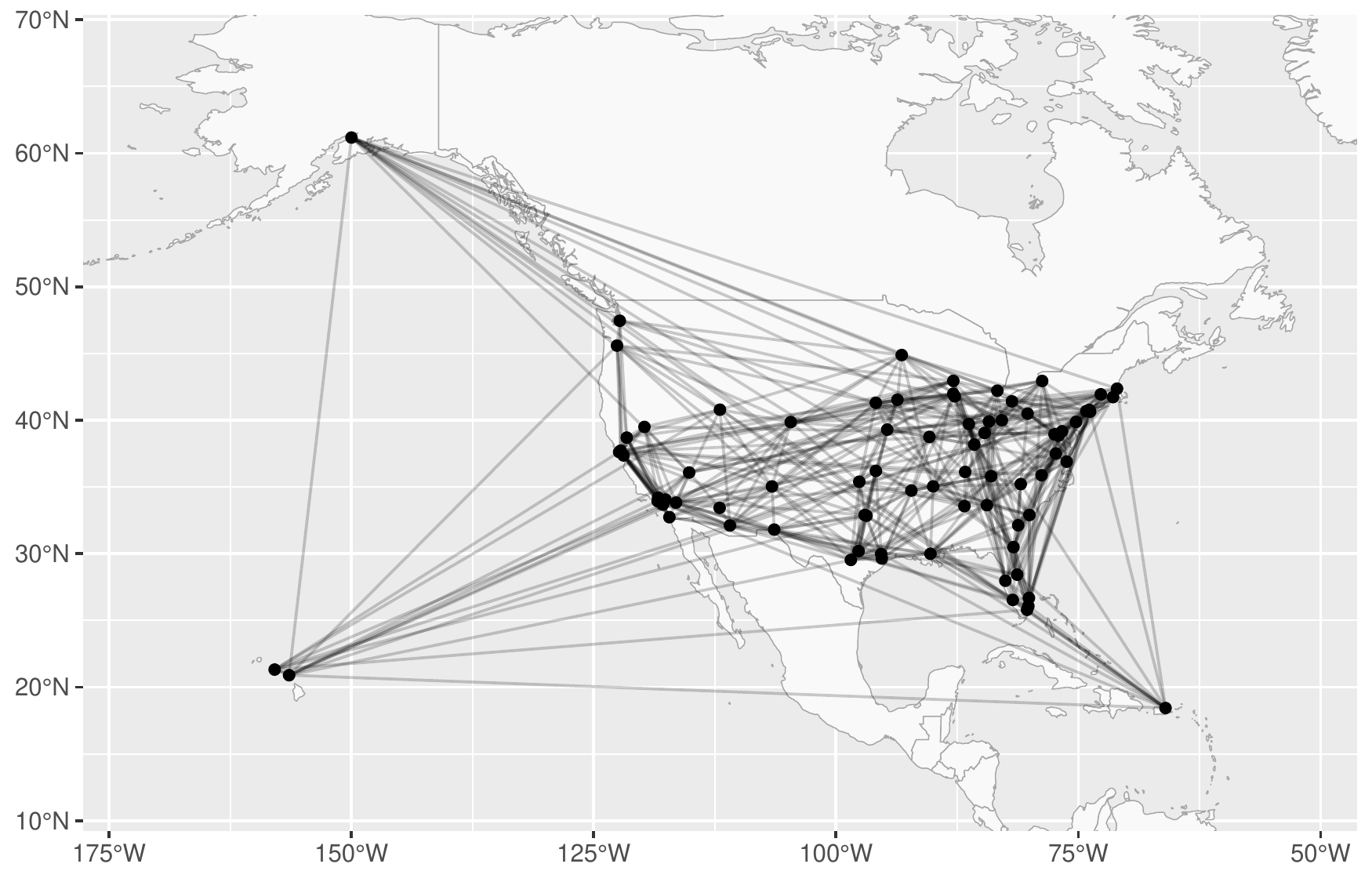}
	\caption{The map shows the graph estimate with the largest log-likelihood. The graph contains 486 edges. The corresponding step~(S1) estimate already satisfies the local metric property and is therefore identical to the step~(S2) estimate.}\label{fig:best_graph}
\end{figure}
This estimate contains 486 out of 3081 possible edges. The step~(S1) estimate satisfies the local metric property with respect to the graph, so that the step~(S2) estimate is identical to the step~(S1) estimate. 
Note that this renders the second step redundant.

In summary, we observed that for larger penalization parameters, i.e.~for sparser graph estimates, the first step estimates already satisfied the local metric property, confirming our expectation of local positive dependence for highly connected subgroups of airports. For coarser graph estimates, the second step only marginally decreased the log-likelihood, while allowing for a local metric interpretation of the parameter estimate.
We conclude that the local metric property is a very reasonable assumption for modeling the flights data set.

\section*{Acknowledgments}
The authors would like to thank Sebastian Engelke and Manuel Hentschel for very helpful discussions and support.
Frank R\"ottger was supported by the Swiss National Science Foundation (Grant 186858).

\appendix
\section{Proofs}\label{app:proofs}

\subsection{Proof of Proposition~\ref{prop:dual}}\label{app:pr_dual}
We define a function which incorporate the unconstrained dual surrogate log-likelihood function \eqref{eq:dual_loglik} of $\Gamma$ given $\widehat{Q}$ and the constraint for $\Gamma$ to be conditionally negative definite, i.e.
\begin{equation*}
	f(\Gamma):= \left\{ \begin{array}{rcl}
		-\widecheck{\ell}(\Gamma; \widehat{Q}), & \mbox{for}\;  \Gamma \in \mathcal{C}^d
		\\ +\infty, & \mbox{otherwise.} 
	\end{array}\right.
\end{equation*}
Thus the problem \eqref{optprob} is equivalent to
\begin{equation*}
	\begin{aligned}
		& \underset{\Gamma \in \mathcal{C}^d}{\text{minimize}}
		& & f(\Gamma)\\
		& \text{subject to}
		& & g_{ijk} \leq 0 \quad \forall (i,j,k) \in \Psi,
	\end{aligned}
\end{equation*}
and the Lagrangian for this problem is
\begin{align}
	L(\Gamma, \boldsymbol \eta) & = f(\Gamma) + \sum_{(i,j,k) \in \Psi}\left( \eta_{ijk} g_{ijk} + \eta_{kij} g_{kij} +\eta_{jki} g_{jki} \right).  \label{lagrangeprimal}
\end{align}
Given the Lagrangian above, we define the Fenchel conjugate of $f(\Gamma)$, which we denote $f^*(\boldsymbol \eta)$:
\begin{align*} 
	f^*(\boldsymbol \eta) & = \sup_{\boldsymbol\eta \geq \mathbf{0}} L(\Gamma,\eta)  =  \begin{cases}
		f(\Gamma), & \mbox{for}\; g_{ijk} \leq 0, \forall (i,j,k) \in \Psi,\\ 
		+\infty, & \mbox{otherwise.} 
	\end{cases}
\end{align*}
This implies that the solution to problem (\ref{optprob}) equals
\begin{equation}\label{infsup}
	\inf_{\Gamma \in \mathcal{C}^d} \sup_{\boldsymbol \eta \geq \mathbf{0}} L(\Gamma,\boldsymbol{\eta}). 
\end{equation}
By duality theory, the Lagrange dual function $ \inf_{\Gamma \in \mathcal{C}^d} L(\Gamma,\boldsymbol{\eta}) $ forms a lower bound for the optimal value of our original problem, and thus the supremum of the Lagrange dual function over $\boldsymbol \eta \geq 0$ is the most tight of those lower bounds. 
This is called weak duality. By the weak Slater's condition, for any convex problem with only affine inequality constraints we obtain strong duality, so that the lower bound mentioned above is a tight bound. 
Therefore, the optimal value for our original/primary problem in \eqref{infsup} is the same as
\begin{equation}\label{supinf}
	\sup_{\boldsymbol \eta \geq \mathbf{0}} \inf_{\Gamma \in \mathcal{C}^d} L(\Gamma,\boldsymbol{\eta}),
\end{equation}
meaning that in the case where strong duality holds, we can switch the infimum and supremum and still obtain the optimal solution of the primary problem.
By Proposition 4.3 in \citet{LZ2022}, the infimum $ \inf_{\Gamma \in \mathcal{C}^d} L(\Gamma,\boldsymbol{\eta}) $ exists and is obtained by the unique $\Gamma \in \mathcal{C}^d$ for which the gradient of (\ref{lagrangeprimal}) equals to the zero matrix (for a fixed $\boldsymbol{\eta}$). 
This is the first KKT condition (stationarity)
\begin{align*}
	\nabla_{\Gamma} L(\Gamma,\boldsymbol{\eta}) &= \widehat{Q} - Q + \sum_{(i,j,k) \in \Psi} \eta_{ijk} \nabla_{\Gamma} g_{ijk}=\mathbf{0},
\end{align*}
so that the infimum with respect to $\Gamma$ is reached where 
\begin{equation}  \label{sollagrangian}
	Q  = \widehat{Q} + \sum_{(i,j,k) \in \Psi} \eta_{ijk} \nabla_{\Gamma} g_{ijk},
\end{equation}
which gives the $Q$ (or equivalently the corresponding $\Gamma$) related to any value of the KKT multiplier $\boldsymbol{\eta}$ which gives a solution to the Lagrange dual function.
We therefore define a linear transform of $ \boldsymbol{\eta} $ to $ \mathbb{S}_0^d $ given $\widehat{Q}$ by
\begin{align}
	Q(\boldsymbol{\eta})=\widehat{Q}+\sum_{(i,j,k)\in \Psi} \eta_{ijk} \nabla_{\Gamma}g_{ijk}, \label{eq:Q_from_eta}
\end{align}
and call the corresponding variogram $ \Gamma(\boldsymbol{\eta}) $, which is the unique minimizer of the Lagrange dual function.
This yields that
\begin{align*}
	\inf_{\Gamma \in \mathcal{C}^d}L(\Gamma,\boldsymbol{\eta})=L(\Gamma(\boldsymbol{\eta}),\boldsymbol{\eta})&= - \log \det(\CM(\Gamma(\boldsymbol{\eta}))) +   \langle Q(\boldsymbol{\eta}), \Gamma(\boldsymbol{\eta})\rangle\\
	&= \log\left(\sum_{T\in \mathcal{T}} \prod_{ij \in T}Q_{ij}(\boldsymbol{\eta})\right) + (d-1),
\end{align*}
where we used $ A(\Gamma)=A(Q) $ in the last equation.
We can now re-express (\ref{supinf}) solely in terms of the KKT multipliers $\boldsymbol{\eta}\in\RR^m$,
so that we obtain the proposition.

\bibliographystyle{chicago} 
\bibliography{bibliography}   

\begin{thebibliography}{}

\bibitem[\protect\citeauthoryear{Barndorff-Nielsen}{Barndorff-Nielsen}{1978}]{BN1978}
Barndorff-Nielsen, O. (1978).
\newblock {\em Information and exponential families in statistical theory}.
\newblock Wiley Series in Probability and Mathematical Statistics. John Wiley
  \& Sons, Ltd., Chichester.

\bibitem[\protect\citeauthoryear{Benjamini and Yekutieli}{Benjamini and
  Yekutieli}{2001}]{Benjamini2001}
Benjamini, Y. and D.~Yekutieli (2001).
\newblock The control of the false discovery rate in multiple testing under
  dependency.
\newblock {\em Ann. Statist.\/}~{\em 29\/}(4), 1165--1188.

\bibitem[\protect\citeauthoryear{Boyd and Vandenberghe}{Boyd and
  Vandenberghe}{2004}]{Boyd2004}
Boyd, S. and L.~Vandenberghe (2004).
\newblock {\em Convex optimization}.
\newblock Cambridge University Press, Cambridge.

\bibitem[\protect\citeauthoryear{Brown}{Brown}{1986}]{Brown1986}
Brown, L.~D. (1986).
\newblock {\em Fundamentals of statistical exponential families with
  applications in statistical decision theory}, Volume~9 of {\em Institute of
  Mathematical Statistics Lecture Notes---Monograph Series}.
\newblock Institute of Mathematical Statistics, Hayward, CA.

\bibitem[\protect\citeauthoryear{Chen, Liu, Chen, Khong, Wang, Başar, Qiu, and
  Johansson}{Chen et~al.}{2016}]{Chen2016}
Chen, W., J.~Liu, Y.~Chen, S.~Z. Khong, D.~Wang, T.~Başar, L.~Qiu, and K.~H.
  Johansson (2016).
\newblock Characterizing the positive semidefiniteness of signed {L}aplacians
  via effective resistances.
\newblock In {\em 2016 IEEE 55th Conference on Decision and Control (CDC)},
  pp.\  985--990.

\bibitem[\protect\citeauthoryear{Cotilla-Sanchez, Hines, Barrows, and
  Blumsack}{Cotilla-Sanchez et~al.}{2012}]{Cotilla2012}
Cotilla-Sanchez, E., P.~D.~H. Hines, C.~Barrows, and S.~Blumsack (2012).
\newblock Comparing the topological and electrical structure of the north
  american electric power infrastructure.
\newblock {\em IEEE Systems Journal\/}~{\em 6\/}(4), 616--626.

\bibitem[\protect\citeauthoryear{Devriendt}{Devriendt}{2022}]{Devriendt2022}
Devriendt, K. (2022).
\newblock Effective resistance is more than distance: {L}aplacians, simplices
  and the {S}chur complement.
\newblock {\em Linear Algebra Appl.\/}~{\em 639}, 24--49.

\bibitem[\protect\citeauthoryear{Engelke and Hitz}{Engelke and
  Hitz}{2020}]{EH2020}
Engelke, S. and A.~S. Hitz (2020).
\newblock Graphical models for extremes.
\newblock {\em J. R. Stat. Soc. Ser. B. Stat. Methodol.\/}~{\em 82\/}(4),
  871--932.
\newblock With discussions.

\bibitem[\protect\citeauthoryear{Engelke, Hitz, Gnecco, and Hentschel}{Engelke
  et~al.}{2019}]{graphicalExtremespackage}
Engelke, S., S.~Hitz, N.~Gnecco, and M.~Hentschel (2019).
\newblock graphicalextremes: statistical methodology for graphical extreme
  value models.
\newblock {\em R package\/}.

\bibitem[\protect\citeauthoryear{Engelke, Ivanovs, and Strokorb}{Engelke
  et~al.}{2022}]{EIS2022}
Engelke, S., J.~Ivanovs, and K.~Strokorb (2022).
\newblock Graphical models for infinite measures with applications to extremes
  and {L}évy processes.

\bibitem[\protect\citeauthoryear{Engelke, Lalancette, and Volgushev}{Engelke
  et~al.}{2021}]{ELV2021}
Engelke, S., M.~Lalancette, and S.~Volgushev (2021).
\newblock Learning extremal graphical structures in high dimensions.

\bibitem[\protect\citeauthoryear{Engelke, Malinowski, Kabluchko, and
  Schlather}{Engelke et~al.}{2015}]{EMKS2015}
Engelke, S., A.~Malinowski, Z.~Kabluchko, and M.~Schlather (2015).
\newblock Estimation of {H}\"{u}sler-{R}eiss distributions and
  {B}rown-{R}esnick processes.
\newblock {\em J. R. Stat. Soc. Ser. B. Stat. Methodol.\/}~{\em 77\/}(1),
  239--265.

\bibitem[\protect\citeauthoryear{Engelke and Volgushev}{Engelke and
  Volgushev}{2022}]{EV2022}
Engelke, S. and S.~Volgushev (2022).
\newblock Structure learning for extremal tree models.
\newblock {\em J. R. Stat. Soc. Ser. B. Stat. Methodol.\/}~{\em 84\/}(5),
  2055--2087.

\bibitem[\protect\citeauthoryear{Esary, Proschan, and Walkup}{Esary
  et~al.}{1967}]{Esary1967}
Esary, J.~D., F.~Proschan, and D.~W. Walkup (1967).
\newblock Association of random variables, with applications.
\newblock {\em Ann. Math. Statist.\/}~{\em 38}, 1466--1474.

\bibitem[\protect\citeauthoryear{Hentschel, Engelke, and Segers}{Hentschel
  et~al.}{2022}]{HES2022}
Hentschel, M., S.~Engelke, and J.~Segers (2022).
\newblock Statistical inference for {H}\"usler-{R}eiss graphical models through
  matrix completions.

\bibitem[\protect\citeauthoryear{Holland and Rosenbaum}{Holland and
  Rosenbaum}{1986}]{HR1986}
Holland, P.~W. and P.~R. Rosenbaum (1986).
\newblock Conditional association and unidimensionality in monotone latent
  variable models.
\newblock {\em Ann. Statist.\/}~{\em 14\/}(4), 1523--1543.

\bibitem[\protect\citeauthoryear{H\"{u}sler and Reiss}{H\"{u}sler and
  Reiss}{1989}]{HR1989}
H\"{u}sler, J. and R.-D. Reiss (1989).
\newblock Maxima of normal random vectors: between independence and complete
  dependence.
\newblock {\em Statist. Probab. Lett.\/}~{\em 7\/}(4), 283--286.

\bibitem[\protect\citeauthoryear{Johnson}{Johnson}{2021}]{nlopt}
Johnson, S.~G. (2021).
\newblock The nlopt nonlinear-optimization package.
\newblock \url{http://nlopt.readthedocs.io/en/latest/}.

\bibitem[\protect\citeauthoryear{Karlin and Rinott}{Karlin and
  Rinott}{1980}]{Karlin1980}
Karlin, S. and Y.~Rinott (1980).
\newblock Classes of orderings of measures and related correlation
  inequalities. {I}. {M}ultivariate totally positive distributions.
\newblock {\em J. Multivariate Anal.\/}~{\em 10\/}(4), 467--498.

\bibitem[\protect\citeauthoryear{Kullback and Leibler}{Kullback and
  Leibler}{1951}]{KL1951}
Kullback, S. and R.~A. Leibler (1951).
\newblock On information and sufficiency.
\newblock {\em Ann. Math. Statistics\/}~{\em 22}, 79--86.

\bibitem[\protect\citeauthoryear{Lagonotte, Sabonnadiere, Leost, and
  Paul}{Lagonotte et~al.}{1989}]{Lagonotte1989}
Lagonotte, P., J.~Sabonnadiere, J.-Y. Leost, and J.-P. Paul (1989).
\newblock Structural analysis of the electrical system: application to
  secondary voltage control in {F}rance.
\newblock {\em IEEE Transactions on Power Systems\/}~{\em 4\/}(2), 479--486.

\bibitem[\protect\citeauthoryear{Lauritzen, Uhler, and Zwiernik}{Lauritzen
  et~al.}{2019}]{LUZ2019}
Lauritzen, S., C.~Uhler, and P.~Zwiernik (2019).
\newblock Maximum likelihood estimation in {G}aussian models under total
  positivity.
\newblock {\em Ann. Statist.\/}~{\em 47\/}(4), 1835--1863.

\bibitem[\protect\citeauthoryear{Lauritzen and Zwiernik}{Lauritzen and
  Zwiernik}{2022}]{LZ2022}
Lauritzen, S. and P.~Zwiernik (2022).
\newblock Locally associated graphical models and mixed convex exponential
  families.
\newblock {\em Ann. Statist.\/}~{\em 50\/}(5), 3009--3038.

\bibitem[\protect\citeauthoryear{Lauritzen}{Lauritzen}{1996}]{Lauritzen1996}
Lauritzen, S.~L. (1996).
\newblock {\em Graphical models}, Volume~17 of {\em Oxford Statistical Science
  Series}.
\newblock The Clarendon Press, Oxford University Press, New York.
\newblock Oxford Science Publications.

\bibitem[\protect\citeauthoryear{Lederer and Oesting}{Lederer and
  Oesting}{2023}]{LO2023}
Lederer, J. and M.~Oesting (2023).
\newblock Extremes in high dimensions: Methods and scalable algorithms.

\bibitem[\protect\citeauthoryear{Lehmann}{Lehmann}{1966}]{Lehmann1966}
Lehmann, E.~L. (1966).
\newblock Some concepts of dependence.
\newblock {\em Ann. Math. Statist.\/}~{\em 37}, 1137--1153.

\bibitem[\protect\citeauthoryear{Marshall and Olkin}{Marshall and
  Olkin}{1983}]{MO1983}
Marshall, A.~W. and I.~Olkin (1983).
\newblock Domains of attraction of multivariate extreme value distributions.
\newblock {\em Ann. Probab.\/}~{\em 11\/}(1), 168--177.

\bibitem[\protect\citeauthoryear{Pitt}{Pitt}{1982}]{Pitt1982}
Pitt, L.~D. (1982).
\newblock Positively correlated normal variables are associated.
\newblock {\em Ann. Probab.\/}~{\em 10\/}(2), 496--499.

\bibitem[\protect\citeauthoryear{Ravikumar, Wainwright, Raskutti, and
  Yu}{Ravikumar et~al.}{2011}]{Ravikumar2011}
Ravikumar, P., M.~J. Wainwright, G.~Raskutti, and B.~Yu (2011).
\newblock High-dimensional covariance estimation by minimizing
  {$\ell_1$}-penalized log-determinant divergence.
\newblock {\em Electron. J. Stat.\/}~{\em 5}, 935--980.

\bibitem[\protect\citeauthoryear{Resnick}{Resnick}{2008}]{Resnick2008}
Resnick, S.~I. (2008).
\newblock {\em Extreme values, regular variation and point processes}.
\newblock Springer Series in Operations Research and Financial Engineering.
  Springer, New York.
\newblock Reprint of the 1987 original.

\bibitem[\protect\citeauthoryear{Rootz\'{e}n and Tajvidi}{Rootz\'{e}n and
  Tajvidi}{2006}]{Rootzen2006}
Rootz\'{e}n, H. and N.~Tajvidi (2006).
\newblock Multivariate generalized {P}areto distributions.
\newblock {\em Bernoulli\/}~{\em 12\/}(5), 917--930.

\bibitem[\protect\citeauthoryear{R\"ottger, Coons, and Grosdos}{R\"ottger
  et~al.}{2023}]{RCG2023}
R\"ottger, F., J.~I. Coons, and A.~Grosdos (2023).
\newblock Parametric and nonparametric symmetries in graphical models for
  extremes.

\bibitem[\protect\citeauthoryear{R\"ottger, Engelke, and Zwiernik}{R\"ottger
  et~al.}{2021}]{REZ2021}
R\"ottger, F., S.~Engelke, and P.~Zwiernik (2021).
\newblock Total positivity in multivariate extremes.

\bibitem[\protect\citeauthoryear{Sarkar}{Sarkar}{1969}]{sarkar1969}
Sarkar, T. (1969).
\newblock {\em Some Lower Bounds of Reliability}.
\newblock Technical Report, 124. Stanford University.

\bibitem[\protect\citeauthoryear{Sturmfels, Uhler, and Zwiernik}{Sturmfels
  et~al.}{2020}]{SUZ2020}
Sturmfels, B., C.~Uhler, and P.~Zwiernik (2020).
\newblock Brownian motion tree models are toric.
\newblock {\em Kybernetika (Prague)\/}~{\em 56\/}(6), 1154--1175.

\bibitem[\protect\citeauthoryear{Svanberg}{Svanberg}{2002}]{Svanberg2001}
Svanberg, K. (2002).
\newblock A class of globally convergent optimization methods based on
  conservative convex separable approximations.
\newblock {\em SIAM J. Optim.\/}~{\em 12\/}(2), 555--573.

\bibitem[\protect\citeauthoryear{Wang, Roy, and Uhler}{Wang
  et~al.}{2020}]{WRU2020}
Wang, Y., U.~Roy, and C.~Uhler (2020).
\newblock Learning high-dimensional {G}aussian graphical models under total
  positivity without adjustment of tuning parameters.
\newblock In S.~Chiappa and R.~Calandra (Eds.), {\em Proceedings of the Twenty
  Third International Conference on Artificial Intelligence and Statistics},
  Volume 108 of {\em Proceedings of Machine Learning Research}, pp.\
  2698--2708. PMLR.
\newblock \url{https://proceedings.mlr.press/v108/wang20g.html}.

\end{thebibliography}
\end{document}